\newcommand{\ignore}[1]{}
\newtheorem{theorem}{Theorem}[section]
\newtheorem{lemma}[theorem]{Lemma}
\newtheorem{corollary}[theorem]{Corollary}
\newtheorem{proposition}[theorem]{Proposition}
\theoremstyle{definition}
\newtheorem{definition}[theorem]{Definition}
\newtheorem{example}[theorem]{Example}
\theoremstyle{remark}
\newtheorem{remark}[theorem]{Remark}
\numberwithin{equation}{section}
\newcommand{\bN}{{\mathbb{N}}}
\newcommand{\bQ}{{\mathbb{Q}}}
\newcommand{\bR}{\mathbb{R}}
\newcommand{\hlt}{\mathrm {ht\, }}
\newcommand{\supp}{\mbox{\rm{supp}} }
\newcommand{\End}{\mathrm{End}}
\newcommand{\Hom}{\mbox{\rm{Hom}} }
\newcommand{\fM}{{\mathfrak m}}
\newcommand{\fa}{\mathfrak{a}}
\newcommand{\cC}{{\mathcal C}}
\newcommand{\cF}{{\mathcal F}}
\newcommand{\lra}{{\longrightarrow}}
\begin{document}

\title[Frobenius and Cartier algebras of Stanley-Reisner rings]{Frobenius and Cartier algebras of Stanley-Reisner rings}

\author[J. \`Alvarez Montaner]{Josep \`Alvarez Montaner$^*$ $^{\dag}$ }
\thanks{$^*$Partially supported by MTM2010-20279-C02-01}
\thanks{$\dag$Partially supported by SGR2009-1284}
\address{Dept. Matem\`atica Aplicada I\\
Univ. Polit\`ecnica de Catalunya\\ Av. Diagonal 647,
Barcelona 08028, SPAIN} \email{Josep.Alvarez@upc.edu}

\author[A. F. Boix]{Alberto F. Boix$^*$ $^{\ddag}$}
\thanks{$^{\ddag}$ Supported by FPU grant AP2006-00088 from Ministerio de Educaci\'on y Ciencia (Spain)}
\address{Dept. \`Algebra i Geometria,
Univ. Barcelona, Gran Via de les Corts Catalanes 585,
Barcelona 08007, SPAIN} \email{hurraca2002@yahoo.es}

\author[S. Zarzuela]{Santiago Zarzuela$^*$}
\thanks{}
\address{Dept. \`Algebra i Geometria,
Univ. Barcelona, Gran Via de les Corts Catalanes 585,
Barcelona 08007, SPAIN} \email{szarzuela@ub.edu}

\keywords {Stanley-Reisner rings, Frobenius algebras, Cartier algebras}

\subjclass[2010]{Primary 13A35; Secondary 13F55, 13N10, 14G17}

\begin{abstract}

We study the generation of the Frobenius algebra of the injective hull of a complete Stanley-Reisner ring over a field with positive characteristic. In particular, by extending the ideas used by M. Katzman to give a counterexample to a question raised by G. Lyubeznik and K. E. Smith about the finite generation of Frobenius algebras, we prove that the Frobenius algebra of the injective hull of a complete Stanley-Reisner ring can be only principally generated or infinitely generated. Also, by using our explicit description of the generators of such algebra and applying the recent work by M. Blickle about Cartier algebras and generalized test ideals, we are able to show that the set of $F$-jumping numbers of generalized test ideals associated to complete Stanley-Reisner rings form a discrete subset inside the non-negative real numbers.

\end{abstract}

\maketitle

\section{Introduction}

M.~Katzman  gave in \cite{Kat10} an example of a co-finite module $M$ over a complete local ring $R$ of prime characteristic $p>0$ for which the algebra of Frobenius operators $\cF(M)$
introduced by G.~Lyubeznik and K.~E.~Smith in \cite{LS} is not finitely generated as $R$-algebra, thus giving a negative answer to a question posed in loc. cit. In his example $R$ is a non-Cohen-Macaulay quotient of a formal power series ring in three variables by a squarefree monomial ideal and $M=E_R$ is the injective hull of the residue field of $R$.

\vskip 2mm

The aim of this work is to extend his ideas to study the algebra of Frobenius operators $\cF(E_R)$ and its Matlis dual notion of algebra of Cartier operators $\cC(R)$ of any Stanley-Reisner ring $R=S/I$ defined by a squarefree monomial ideal $I\subseteq S=k[[x_1,\dots ,x_n]]$. First we obtain in Section $3$ a precise description of these algebras that shows that they can only be principally generated or infinitely generated as $R$-algebras depending on the minimal primary decomposition of the ideal. It is worthwile to mention here that when $\cF(E_R)$ is principally generated it is possible to give an algorithmic description of the tight closure of the zero submodule of $E_R$ (see \cite{Kat10b}).

\vskip 2mm

The Frobenius algebra $\cF(E_R)$ of the injective hull of the residue field of $R$ is principally generated for Gorenstein rings since in this case $E_R$ is isomorphic to the top local cohomology module $H_{\fM}^n(R)$ (see \cite{LS} for details). The converse also holds for normal varieties. In Section $4$ we explore examples of Stanley-Reisner rings at the boundary of the Gorenstein property, but we will see that one may even find both non Cohen-Macaulay examples with principally generated Frobenius algebra and Cohen-Macaulay examples with infinitely generated Frobenius algebra.

\vskip 2mm

Given a pair $(R,\fa^t)$ where $\fa \subseteq R$ is an ideal and $t\in \bR_{\geq 0}$, N. Hara and K. Y. Yoshida \cite{HaYo} introduced generalized test ideals $\tau(R,\fa^t)$ as characteristic $p>0$ analogs of multiplier ideals.
The recent development in the study of Cartier operators has given a new approach to describe generalized test ideals
of non-reduced rings and study the discretness of the associated $F$-jumping numbers (see \cite{ST11} for a recent survey on the subject). In this line of research, M.~Blickle \cite{Bli09} developed the notion of Cartier algebras and proved that when a Cartier algebra is {\it gauge bounded} then the set of $F$-jumping numbers of the corresponding generalized test ideals is discrete. He also proved that finitely generated Cartier algebras are gauge bounded.

\vskip 2mm

For a complete, local and $F$-finite ring it turns out that its algebra of Cartier operators is isomorphic to the opposite of the Frobenius algebra on the injective hull of its residue field. In particular, if $R$ is a Stanley-Reisner ring then its algebra of Cartier operators $\mathcal{C} (R)$ is an $R$-Cartier algebra which is isomorphic to the opposite of the Frobenius algebra $\cF(E_R)$. By using the description we have obtained in Section $3$ for this Frobenius algebra, we are able to prove in Section $5$ that the corresponding Cartier algebra $\mathcal{C} (R)$ is gauge bounded even in the case that it is infinitely generated. Therefore, the $F$-jumping numbers of generalized tests ideals associated to ideals of Stanley-Reisner rings form always a discrete set.

\vskip 2mm


The composition of a Cartier and a Frobenius operator gives a differential operator. This is a natural pairing that is an isomorphism for regular rings, being this fact one of the building blocks in the so-called {\it Frobenius descent} (see \cite{ABL} for an introduction). In general, this pairing is far from being an isomorphism and so it is natural to ask for its image. In the case of a Stanley-Reisner ring $R$ we are able to check out as a final application of our description of its ring of Cartier operators $\cC(R)$ how far this pairing might be from being surjective.

\section{Frobenius algebras vs. Cartier algebras}

Let $R$ be a commutative ring of characteristic $p>0$.  The aim of this Section is to introduce the basic facts on the ring of Frobenius operators $\cF(M)$ and the ring of Cartier operators $\cC(M)$ of an $R$-module $M$.
To this purpose, recall that we can use the $e$-th iterated Frobenius map $F^e:R\lra R$ to define a new $R$-module structure on $M$ given by $rm:=r^{p^e}m$. One denotes this $R$-module $F^e_{\ast}M$. In fact, we can use this to define the {\it $e$-th Frobenius functor}  $F^e_{\ast}$ from the category of left $R$-modules to itself.

\subsection{Frobenius algebras}

A \textit{$p^{e}$-linear map} $\varphi_e: M\lra M$ is an additive map  that satisfies
$\varphi_{e}(r m)=r^{p^e}\varphi_{e}(m) $ for all $r\in R$, $m \in M$.
The set of all $p^{e}$-linear maps is identified with the abelian group $$\cF^e(M):=\Hom_R(M,F_{\ast}^eM).$$
We remark the following facts:

\begin{itemize}
\item[$\cdot$] Composing a $p^{e}$-linear map  and a $p^{e'}$-linear map
 in the obvious way as additive maps we get a $p^{(e+e')}$-linear map.

\item[$\cdot$] Each $\cF^e(M)$ is a left module over $\cF^0(M):=\End_R(M)$.
\end{itemize}

\noindent This leads G.~Lyubeznik and K.~E.~Smith \cite{LS} to cook up the following ring.

\begin{definition}
The ring of Frobenius operators on $M$ is the graded, associative, not necessarily commutative  ring
$$\cF(M):=\bigoplus_{e\geq 0}\cF^e(M)$$

\end{definition}

In loc. cit. the authors raise the question whether $\cF(M)$ is finitely generated as an $\cF^0(M)$-algebra.

\vskip 2mm

Given any commutative ring $R$ one may construct the $e$-th Frobenius skew polynomial ring $R[\theta;F^e]$ as the left $R$-module freely generated by $\{\theta^i\}_{i\geq 0}$ with the multiplication subject to the rule $\theta r=r^{p^e}\theta$ for all $r\in R$. The point of view considered in \cite{LS} is that  $\cF^e(M)$ may be identified with the set of all (left) $R[\theta;F^e]$-module structures on $M$. In particular they also proved:

\begin{itemize}
 \item[$\cdot$] $\cF(R)\cong R[\theta;F].$

\item[$\cdot$] $\cF(H^n_{\fM}(R))\cong R[\theta;F],$ where $H^n_{\fM}(R)$ is the top local cohomology module of a complete $n$-dimensional local Cohen-Macaulay ring $(R,\fM)$.
\end{itemize}

More generally, given $u\in R$, one may also consider the $R$-subalgebra of $R[\theta;F^e]$ generated by the element $u\theta$. In fact, this is also the skew polynomial ring $R[u\theta;F^e]$, freely generated as left $R$-module by $\{(u \theta)^i\}_{i\geq 0}$ with the multiplication $ u\theta r=r^{p^e}u\theta$. These are the examples of principally generated Frobenius algebras that will appear in this work. For more information of skew polynomial rings and Ore extensions one may consult \cite{GW}.

\subsubsection{Frobenius algebra of injective hulls}
Let $S=k[[x_1,...,x_n]]$ be the formal power series ring in $n$
variables over a field $k$ of characteristic $p>0$. Let $I\subseteq
S$ be any ideal and  $E_R$ be the injective hull of the residue field of $R=S/I$.  For this module we have a nice description of the corresponding Frobenius algebra (see  \cite{Bli01}, \cite{Kat08}). Namely, there exists a natural Frobenius action $F$ from $E_R$ onto itself such that for each $e\geq 0$, any $p^e$-linear map from $E_R$ onto itself is uniquely of the form $gF^e$, where $g$ is an element of $(I^{[p^e]}:_S I)/ I^{[p^e]}$. So there exists an isomorphism of $R$-modules $$\cF^e(E_R) \cong  (I^{[p^e]}:_S I)/ I^{[p^e]}$$ that can be extended in a natural way to an isomorphism of $R$-algebras $$\cF(E_R)\cong \bigoplus_{e\geq 0} (I^{[p^e]}:_S I)/ I^{[p^e]}$$
Notice that $\cF(E_R)$ is an $R$-algebra due to the fact that $\cF^0(E_R)=\Hom_R(E_R,E_R)\cong R$.

\vskip 2mm

In the following particular case we may give a more precise description of the structure of a principally generated Frobenius algebra in terms of  skew polynomial rings.

\begin{lemma}
Assume that there is $u\in S$ such that for all $q=p^e$, $e\geq 0$
$$(I^{[q]}:_S I)= I^{[q]} + (u^{q-1}).$$ Then there is an isomorphism of $R$-algebras $\cF(E_R)\cong R[u^{p-1}\theta;F]$.

\end{lemma}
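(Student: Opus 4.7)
The plan is to use the Blickle--Katzman description $\cF^e(E_R)\cong (I^{[q]}:_S I)/I^{[q]}$ recalled in the preceding subsection, under which a class $g+I^{[q]}$ corresponds to the $p^e$-linear operator $gF^e$, and multiplication in $\cF(E_R)$ reads $g_1F^{e_1}\cdot g_2F^{e_2}=g_1 g_2^{p^{e_1}}F^{e_1+e_2}$ (forced by $Fr=r^pF$ on $E_R$). The hypothesis immediately gives that $\cF^e(E_R)$ is cyclic as an $R$-module, generated by the class of $u^{q-1}$, and the exponent identity $(p-1)(1+p+\cdots+p^{e-1})=p^e-1$ yields $(u^{p-1}F)^e=u^{p^e-1}F^e$. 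Thus this generator is precisely the $e$-th power of $u^{p-1}F$, so $\cF(E_R)$ is generated as an $R$-algebra by the single degree-one element $u^{p-1}F$.

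Next I would define the $R$-algebra homomorphism $\Phi\colon R[u^{p-1}\theta;F]\longrightarrow \cF(E_R)$ as the unique one sending $u^{p-1}\theta$ to $u^{p-1}F$. The universal property of the Ore extension guarantees that $\Phi$ is well defined, since $u^{p-1}\theta$ and $u^{p-1}F$ satisfy the same commutation relation $\xi\cdot r=r^p\xi$ for every $r\in R$. By the previous paragraph, $\Phi$ carries the basis element $(u^{p-1}\theta)^e$ of the source to the $R$-module generator $u^{p^e-1}F^e$ of $\cF^e(E_R)$, so $\Phi$ is surjective in every degree.

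The main obstacle is injectivity. Because the source is freely generated as a left $R$-module by the powers $(u^{p-1}\theta)^e$, it suffices to show that for each $e$ the map $R\to\cF^e(E_R)$, $r\mapsto ru^{q-1}F^e$, has trivial kernel, i.e., that $(I^{[q]}:_S u^{q-1})\subseteq I$. The cyclicity of $(I^{[q]}:_S I)/I^{[q]}$ alone does not a priori preclude additional $R$-relations in each graded piece; the key should be to exploit the full strength of the hypothesis, which is imposed for \emph{every} $q=p^e$ at once, so that the compatibility between the generators $u^{q-1}$ across different Frobenius levels rigidifies the $R$-module structure of each $\cF^e(E_R)$ and forces the map from $R$ to be injective, completing the proof.
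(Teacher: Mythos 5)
Your first two paragraphs are fine and follow the route the paper implicitly has in mind (the paper states this lemma without a written proof): under the identification $\cF^e(E_R)\cong (I^{[q]}:_S I)/I^{[q]}$ the hypothesis says each graded piece is generated over $R$ by the class of $u^{q-1}$, the exponent identity $(p-1)(1+p+\cdots+p^{e-1})=p^e-1$ shows this class equals $(u^{p-1}F)^e$, and the universal property of the skew polynomial ring gives a graded $R$-algebra map $\Phi\colon R[u^{p-1}\theta;F]\to\cF(E_R)$, $u^{p-1}\theta\mapsto u^{p-1}F$, which is therefore surjective. Your reduction of injectivity to the statement $(I^{[q]}:_S u^{q-1})\subseteq I$ for all $q$ is also correct, since the source is free as a left $R$-module on the powers of $u^{p-1}\theta$ and the reverse inclusion is automatic from $u^{q-1}I\subseteq I^{[q]}$.

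The genuine gap is that this injectivity, which is the only nontrivial half of the lemma, is never proved: your closing sentence only expresses the hope that imposing the hypothesis for all $q$ simultaneously ``rigidifies'' the graded pieces. That mechanism cannot work as stated, because a relation at level $q$ automatically propagates to all higher levels compatibly with the hypothesis: if $su^{q-1}=\sum a_i f_i^{q}$ with $f_i\in I$, then $su^{pq-1}=(su^{q-1})(u^{p-1})^{q}=\sum a_i (f_i u^{p-1})^{q}\in I^{[pq]}$, since $f_iu^{p-1}\in I\cdot(I^{[p]}:_S I)\subseteq I^{[p]}$; so the higher levels never contradict $s\notin I$, and no ``cross-level'' bookkeeping will force the kernel to vanish. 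What is needed is an actual verification that the annihilator of the class of $u^{q-1}$ in $(I^{[q]}:_S I)/I^{[q]}$ is exactly $I$. In the situations where the paper uses the lemma (Proposition 3.3, cases (i.a) and (iii), with $u=x_1\cdots x_n$ and $I$ squarefree monomial) this is an easy divisibility check you could have included: if $s\,(x_1\cdots x_n)^{q-1}\in I^{[q]}$, then each monomial term of $s$ times $(x_1\cdots x_n)^{q-1}$ is divisible by some generator $\mathbf{x}^{q\alpha}$ of $I^{[q]}$, which forces $\deg_{x_i}$ of that term to be at least $1$ for every $i$ in the support of $\alpha$, hence $\mathbf{x}^{\alpha}$ divides it and $s\in I$. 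Note finally that one cannot dodge the issue by reading $R[u^{p-1}\theta;F]$ as the subalgebra of $R[\theta;F]$ generated by $u^{p-1}\theta$: in case (iii) the image of $u$ in $R$ is zero, so only the free model together with the identity $(I^{[q]}:_S u^{q-1})=I$ yields the asserted isomorphism.
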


\subsection{Cartier algebras}

A \textit{$p^{-e}$-linear map} $\psi_e: M\lra M$ is an additive map that satisfies
$\psi_{e}(r^{p^e} m)=r\psi_{e}(m) $ for all $r\in R$, $m \in M$. We identify the set of $p^{-e}$-linear maps
with the abelian group $$\cC_e(M):=\Hom_R(F_{\ast}^eM,M)$$

Analogously to the case of Frobenius algebras we have the following facts:

\begin{itemize}
\item[$\cdot$] Composing a $p^{-e}$-linear map and a $p^{-e'}$-linear map
 in the obvious way as additive maps we get a $p^{-(e+e')}$-linear map.

\item[$\cdot$] Each $\cC_e(M)$ is a right module over $\cC_0(M):=\End_R(M)$.
\end{itemize}

Thus we may define:

\begin{definition}
The ring of Cartier operators on $M$ is the graded, associative, not necessarily commutative  ring
$$\cC(M):=\bigoplus_{e\geq 0}\cC_e(M)$$

\end{definition}

We should point out that this algebra does not generally fit into the notion of \textit{$R$-Cartier algebra} (or
$R$-algebra of Cartier type) defined by M.~Blickle in \cite[Def. $2.2$]{Bli09} because $\cC_0(M)$ might be too big.
His definition is as follows.

\begin{definition}
Let $R$ be a commutative noetherian ring containing a field of prime characteristic $p>0$. An \textit{$R$-Cartier algebra} (or $R$-algebra of Cartier type) is a positively graded $R$-algebra
$ \mathcal{C}:=\bigoplus_{e\geq 0}\mathcal{C}_e$
 such that $ r\cdot\psi_e=\psi_e\cdot r^{p^e}$ for all $r\in R,\psi_e\in \mathcal{C}_e.$
The $R$-algebra structure of $\mathcal{C}$ is nothing but a ring homomorphism $\xymatrix{R\ar[r]& \mathcal{C}}$; in fact, the image of $R$ lies in $\mathcal{C}_0$ since $1\in R$ maps to $1\in\mathcal{C}_0$. One assumes that this structural map $\xymatrix{R\ar[r]& \mathcal{C}_0}$ is surjective.
\end{definition}

\begin{example}
Let $R$ be a commutative noetherian ring containing a field of characteristic $p>0$. The algebra $\mathcal{C} (R)$ of Cartier operators on $R$ is an $R$-Cartier algebra, since $\End_R(R) = R$.
\end{example}

Given any commutative ring $R$ one may also construct the $e$-th Frobenius skew polynomial ring $R[\varepsilon;F^e]$ as the right $R$-module freely generated by $\{\varepsilon^i\}_{i\geq 0}$ with the multiplication subject to the rule $ r \varepsilon = \varepsilon r^{p^e}$ for all $r\in R$. In fact, $R[\varepsilon;F^e]$ is isomorphic to $R[\theta;F^e]^{op}$, the opposite ring of $R[\theta;F^e]$. As in the case of the Frobenius operators, the set $\cC_e(M)$ may be identified with the set of all (left) $R[\varepsilon;F^e]$-module structures on $M$, or equivalently with the set of (right) $R[\theta;F^e]$-module structures on $M$.

\vskip 2mm

Again, for any element $u\in R$, one may also consider the $R$-subalgebra of $R[\varepsilon;F^e]$ generated by the element $\varepsilon u$, which is isomorphic to the skew polynomial ring $R[\varepsilon u;F^e]$ freely generated as a right $R$-module by $\{(\varepsilon u)^i\}_{i\geq 0}$ with the multiplication $ r \varepsilon u = \varepsilon u r^{p^e}$. These are the examples of principally generated $R$-Cartier algebras that will appear in this work.

\subsubsection{Matlis duality}
Let $(R,\fM)$ be complete, local and $F$-finite. Then Matlis duality induces an equivalence of categories between left $R[\theta;F^e]$-modules which are co-finite as $R$-modules and $R$-finitely generated right $R[\theta, F^e]$-modules, equivalently $R$-finitely generated left $R[\varepsilon, F^e]$-modules, see \cite{BB}, \cite{SY} for details. This equivalence is compatible with the corresponding ring structures for the Frobenius operators and Cartier operators. It follows that the $R$-Cartier algebra $\mathcal{C} (R)$ is isomorphic to the opposite of the algebra $\cF(E_R)$ of Frobenius operators on the injective hull of the residue field of $R$.

\section{Main result}

Let $S=k[[x_1,...,x_n]]$ be the formal power series ring in $n$
variables over a field $k$ of characteristic $p>0$. Let $I\subseteq
S$ be an ideal generated by squarefree
monomials ${\bf x^{\alpha}}:= x_1^{\alpha_1}\cdots
x_n^{\alpha_n}, \hskip 2mm {\bf \alpha}\in
\{0,1\}^n$. Its minimal primary decomposition $I=I_{\alpha_1} \cap \cdots \cap I_{\alpha_s}$ is given in terms
of { face ideals} ${I_{\alpha}}:= \langle x_i\hskip 2mm | \hskip 2mm
\alpha_i \neq 0 \rangle, \hskip 2mm  {\bf \alpha}\in \{0,1\}^n.$
For simplicity we will
denote the homogeneous maximal ideal $\fM:=I_{{\bf
1}}=(x_1,\dots,x_n)$, where ${\bf 1}=(1,\dots,1)\in \{0,1\}^n$.
 The positive support of ${\alpha}$ is
${\rm supp}_{+}(\alpha):=\{i \hskip 1mm | \hskip 1mm \alpha_i>0 \}$ and we will denote $|\alpha|= \alpha_1
+\cdots+\alpha_n$.

\vskip 2mm

The main goal of this Section is describing $(I^{[q]}:_S I)$ for any squarefree monomial ideal $I\subseteq S$, where $q=p^e$, $e\geq 0$.  First
we collect some well-known general facts on colon ideals. Let $J\subset S$ be an ideal:

\vskip 2mm

\begin{itemize}

\item[$\cdot$] $(\bigcap_{i=1}^r I_{\alpha_i}:_S J)= \bigcap_{i=1}^r (I_{\alpha_i}:_S J)$

\vskip 2mm

\item[$\cdot$] $(J :_S \bigcap_{i=1}^r I_{\alpha_i})\supseteq \sum_{i=1}^r (J:_S I_{\alpha_i})$

\end{itemize}

Moreover, assume that $I=({f_1}, \cdots, {f_t})
\subseteq S$. Then:

\begin{itemize}

\item[$\cdot$] $(J:_S I)= \bigcap_{i=1}^t (J:_S ({f_i}))$

\vskip 2mm

\item[$\cdot$] $(J :_S ({f_i}))= \frac{1}{{f_i}} (J\cap ({f_i}))$

\end{itemize}

For the very particular case of face ideals we get:

\begin{lemma}
Let $I_{\alpha}, I_{\beta}\subseteq S=k[[x_1,...,x_n]]$ be face
ideals. Then:

\begin{itemize}
\item [(i)] $(I_{\alpha}^{[q]}:_S I_{\alpha})= I_{\alpha}^{[q]}+({\bf
x}^{\alpha})^{q-1} $

\vskip 2mm

\item [(ii)] $(I_{\alpha}^{[q]}:_S I_{\beta})= I_{\alpha}^{[q]} $

\end{itemize}

\end{lemma}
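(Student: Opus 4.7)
The plan is to reduce both parts to computations with principal colon ideals by iterating the identities $(J:_S I)=\bigcap_{i=1}^t(J:_S(f_i))$ for $I=(f_1,\dots,f_t)$ together with $(J:_S(f))=(1/f)(J\cap(f))$, both recorded just before the lemma. Since every ideal in sight is monomial, each intermediate colon ideal is again monomial, so inclusions can be verified one monomial at a time by inspecting exponent vectors.

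For part (i), write $I_{\alpha}=(x_i:i\in \spos(\alpha))$ and observe that
$$(I_{\alpha}^{[q]}:_S I_{\alpha})=\bigcap_{i\in \spos(\alpha)}(I_{\alpha}^{[q]}:_S(x_i)).$$
A monomial ${\bf x}^{\gamma}$ lies in $(I_{\alpha}^{[q]}:_S(x_i))$ iff $x_i{\bf x}^{\gamma}$ is divisible by $x_j^q$ for some $j\in \spos(\alpha)$; this happens iff either $\gamma_j\ge q$ for some such $j$ (so ${\bf x}^{\gamma}\in I_{\alpha}^{[q]}$ already) or else $j=i$ and $\gamma_i\ge q-1$. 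Hence $(I_{\alpha}^{[q]}:_S(x_i))=I_{\alpha}^{[q]}+(x_i^{q-1})$. Intersecting over all $i\in \spos(\alpha)$ then forces a monomial outside $I_{\alpha}^{[q]}$ to satisfy $\gamma_i\ge q-1$ for every $i\in \spos(\alpha)$, i.e., to be a multiple of $({\bf x}^{\alpha})^{q-1}$. The reverse containment is clear, since $({\bf x}^{\alpha})^{q-1}\cdot x_i$ is divisible by $x_i^q$ for each $i\in \spos(\alpha)$.

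For part (ii) the natural hypothesis is $\spos(\beta)\not\subseteq \spos(\alpha)$ (otherwise (i) would already give a strictly larger colon via $I_{\alpha}\supseteq I_{\beta}$). Pick $i\in \spos(\beta)\setminus \spos(\alpha)$: the variable $x_i$ does not appear among the monomial generators of $I_{\alpha}^{[q]}$, so it is a nonzerodivisor on $S/I_{\alpha}^{[q]}$, giving $(I_{\alpha}^{[q]}:_S(x_i))=I_{\alpha}^{[q]}$. Consequently
$$(I_{\alpha}^{[q]}:_S I_{\beta})=\bigcap_{j\in \spos(\beta)}(I_{\alpha}^{[q]}:_S(x_j))\subseteq (I_{\alpha}^{[q]}:_S(x_i))=I_{\alpha}^{[q]},$$
and the reverse inclusion is automatic.

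The only nontrivial piece is the monomial-intersection step in (i); once that is cleanly recorded, (ii) drops out immediately. I foresee no real obstacle beyond making the exponent-vector bookkeeping precise, and these two identities are exactly the local building blocks one needs before attacking the general squarefree ideal $I=I_{\alpha_1}\cap\cdots\cap I_{\alpha_s}$ in the main theorem.
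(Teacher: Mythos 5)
Your argument is correct and follows essentially the same route as the paper: decompose the colon ideal over the variable generators of the second ideal, compute each principal colon $(I_{\alpha}^{[q]}:_S x_i)=I_{\alpha}^{[q]}+(x_i^{q-1})$, and intersect; for (ii) both you and the paper pick a variable of $I_{\beta}$ outside ${\rm supp}_{+}(\alpha)$ (the statement tacitly assumes such a variable exists, i.e. $I_{\beta}\not\subseteq I_{\alpha}$, exactly the hypothesis you flag). The only difference is that you spell out the monomial-by-monomial bookkeeping that the paper leaves implicit.
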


\begin{proof}
We only have to use the previous properties as follows:

\begin{itemize}
\item [(i)]$
(I_{\alpha}^{[q]}:_S I_{\alpha})= \bigcap_{i\in {\rm
supp}_{+}(\alpha)}(I_{\alpha}^{[q]}:_S x_i)= \bigcap_{i\in {\rm
supp}_{+}(\alpha)}(I_{\alpha}^{[q]}+
(x_i^{q-1}))=I_{\alpha}^{[q]}+({\bf x}^{\alpha})^{q-1}.$

\item [(ii)] Pick up $k\in {\rm supp}_{+}(\beta)$ such that $k \not\in
{\rm supp}_{+}(\alpha)$. Then:  $$ I_{\alpha}^{[q]}\subseteq
(I_{\alpha}^{[q]}:_S I_{\beta})= \bigcap_{i\in {\rm
supp}_{+}(\beta)}(I_{\alpha}^{[q]}:_S x_i) \subseteq
(I_{\alpha}^{[q]}:_S x_k) =  I_{\alpha}^{[q]}$$

\end{itemize}

\end{proof}

Notice that for height one face ideals we have $I_{\alpha}^{[q]}+({\bf x}^{\alpha})^{q-1} =({\bf x}^{\alpha})^{q-1} $. It is also useful
to point that if $I_{\alpha}=(x_{i_1},\dots ,x_{i_r})$ then $I_{\alpha}^{[q]}+({\bf
x}^{\alpha})^{q-1} = \bigcap_{j=1}^r(x_{i_1}^q,\dots , x_{i_j}^{q-1},\dots ,x_{i_r}^q)$.

\vskip 2mm

The main result in this Section is:

\begin{proposition}
Let $I=I_{\alpha_1} \cap \cdots \cap I_{\alpha_s}$ be the minimal
primary decomposition of a squarefree monomial ideal $I\subseteq
S$. Then:
$$(I^{[q]}:_S I)= (I_{\alpha_1}^{[q]}+({\bf
x}^{\alpha_1})^{q-1}) \cap \cdots \cap (I_{\alpha_s}^{[q]}+({\bf
x}^{\alpha_s})^{q-1}) $$

\end{proposition}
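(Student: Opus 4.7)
The plan is to reduce the computation to the single-component case addressed by Lemma 3.1(i) via two clean operations: pushing the Frobenius through the intersection $I=\bigcap_i I_{\alpha_i}$, and then using that colons distribute over intersections on the left slot. For the first step, one notes that for any two monomial ideals $J,K$ one has $(J\cap K)^{[q]}=J^{[q]}\cap K^{[q]}$, because both sides are monomial ideals generated by $\{\mathrm{lcm}(m^q,n^q)\}=\{\mathrm{lcm}(m,n)^q\}$ as $m,n$ range over monomial generators of $J,K$. Iterating, $I^{[q]}=\bigcap_i I_{\alpha_i}^{[q]}$. Combining this with the first bulleted identity in Section 3 yields
$$(I^{[q]}:_S I)=\bigcap_{i=1}^s(I_{\alpha_i}^{[q]}:_S I).$$
So it suffices to prove, for each $i$, the equality $(I_{\alpha_i}^{[q]}:_S I)=I_{\alpha_i}^{[q]}+(\mathbf{x}^{\alpha_i})^{q-1}$, and then Lemma 3.1(i) closes the argument.

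For each fixed $i$, I would in fact show the stronger equality $(I_{\alpha_i}^{[q]}:_S I)=(I_{\alpha_i}^{[q]}:_S I_{\alpha_i})$. The inclusion $\supseteq$ is immediate from $I\subseteq I_{\alpha_i}$. For $\subseteq$, the key is to exploit minimality of the primary decomposition: for every $j\neq i$ there exists an index $k_j\in\mathrm{supp}_+(\alpha_j)\setminus\mathrm{supp}_+(\alpha_i)$, for otherwise $I_{\alpha_i}\subseteq I_{\alpha_j}$ would contradict minimality. Set $g:=\prod_{j\neq i}x_{k_j}$. By construction $g\in I_{\alpha_j}$ for every $j\neq i$, while the support of $g$ is disjoint from $\mathrm{supp}_+(\alpha_i)$. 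One then checks $g\cdot I_{\alpha_i}\subseteq\bigcap_j I_{\alpha_j}=I$: for $j\neq i$ the factor $g$ supplies membership, and for $j=i$ the factor from $I_{\alpha_i}$ does. Consequently, if $f\in(I_{\alpha_i}^{[q]}:_S I)$, then $fg\cdot I_{\alpha_i}\subseteq f\cdot I\subseteq I_{\alpha_i}^{[q]}$, i.e.\ $fg\in(I_{\alpha_i}^{[q]}:_S I_{\alpha_i})=I_{\alpha_i}^{[q]}+(\mathbf{x}^{\alpha_i})^{q-1}$.

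It remains to cancel $g$. This is the only nontrivial point, and it is the step I expect to be the main obstacle to write cleanly. The idea is a monomial coprimality observation: both $I_{\alpha_i}^{[q]}$ and $(\mathbf{x}^{\alpha_i})^{q-1}$ are monomial ideals whose generators involve only variables indexed by $\mathrm{supp}_+(\alpha_i)$, while every variable appearing in $g$ lies outside $\mathrm{supp}_+(\alpha_i)$. Writing $f$ as a (formal) sum of monomials and testing membership in the monomial ideal $J:=I_{\alpha_i}^{[q]}+(\mathbf{x}^{\alpha_i})^{q-1}$ monomial by monomial, a generator of $J$ divides $mg$ (for $m$ a monomial of $f$) if and only if it divides $m$, since the variables of $g$ are extraneous. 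Hence $fg\in J$ forces $f\in J$, which completes the chain of equalities and proves the proposition.
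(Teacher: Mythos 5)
Your argument is correct, and for the key inclusion it takes a genuinely different route from the paper's. The reduction to $\bigcap_i(I_{\alpha_i}^{[q]}:_S I)$ and the easy containment are essentially as in the paper, but for the hard direction $(I_{\alpha_i}^{[q]}:_S I)\subseteq(I_{\alpha_i}^{[q]}:_S I_{\alpha_i})$ the paper proceeds differently: for each minimal generator $x_j$ of $I_{\alpha_i}$ it selects a minimal generator of $I$ of the form $x_jx_{i_2}\cdots x_{i_r}$ with $x_{i_2},\dots,x_{i_r}\notin I_{\alpha_i}$, collects these into an auxiliary ideal $I_i\subseteq I$, and computes $(I_{\alpha_i}^{[q]}:_S I_i)$ explicitly as an intersection of colons by principal monomial ideals. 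You instead choose one variable $x_{k_j}\in I_{\alpha_j}\setminus I_{\alpha_i}$ for each $j\neq i$, multiply by $g=\prod_{j\neq i}x_{k_j}$ to push $I_{\alpha_i}$ into $I$, and cancel $g$ at the end. Both proofs rest on the same two pillars --- minimality of the decomposition, and the irrelevance for membership in $I_{\alpha_i}^{[q]}+({\bf x}^{\alpha_i})^{q-1}$ of variables outside $\mathrm{supp}_+(\alpha_i)$ --- but yours works purely with the supports $\alpha_j$ and never needs to identify minimal monomial generators of $I$ itself, which is arguably cleaner; the paper's version has the mild advantage of displaying the intermediate colon ideals explicitly. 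Two small points. First, the power-series subtlety you flag is harmless: for a finitely generated monomial ideal of $k[[x_1,\dots,x_n]]$ an element lies in the ideal if and only if each monomial of its expansion does, so the monomial-by-monomial cancellation of $g$ is valid. Second, in your appeal to minimality the containment should read $I_{\alpha_j}\subseteq I_{\alpha_i}$: the absence of an index in $\mathrm{supp}_+(\alpha_j)\setminus\mathrm{supp}_+(\alpha_i)$ means $\mathrm{supp}_+(\alpha_j)\subseteq\mathrm{supp}_+(\alpha_i)$, which makes $I_{\alpha_i}$ redundant in the intersection; either containment contradicts minimality, so the conclusion stands.
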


Before proving this result we need to introduce some notation. For every component $I_{\alpha_i}$ we will associate a (non unique) ideal $0\neq I_i \subseteq I$ having a very specific form. For simplicity we will just develop the case $i=1$. Let $x_j\in I_{\alpha_1}$ a minimal generator of $I_{\alpha _1}$. Then one can always find a minimal generator $x_{i_1}x_{i_2}\cdots x_{i_r}$ of $I$ such that $x_{i_1}= x_j$ and $x_{i_k}$ does not belong to $I_{\alpha _1}$ for any $k = 2, \dots ,r$. We can do so because we are considering a minimal primary decomposition of $I$. Now do this for each generator of $I_{\alpha_1}$ and define $I_1$ as the ideal generated by these monomials. Note that the monomials are all distinct.

\begin{example}
 Let $I=(xyt,xz,yz,zt)=(x,y,t)\cap(x,z)\cap(y,z) \cap (z,t) \subseteq k[[x,y,z,t]]$. We have the following ideals associated to each face ideal:

\begin{itemize}
 \item[$\cdot$] $I_1 = (xz,yz,zt)$

 \item[$\cdot$] $I_2 = (xyt,yz)$

 \item[$\cdot$] $I_3 = (xyt,xz)$

 \item[$\cdot$] $I_4 = (xz, xyt)$

\end{itemize}

\noindent
Note that we could also choose $I_2 = (xyt, zt)$, or $I_4 = (yz, xyt)$.

\end{example}

\begin{proof}[Proof of Proposition $3.2$] We have:
$$(I^{[q]}:_S I)=(I_{\alpha_1}^{[q]} \cap \cdots \cap I_{\alpha_s}^{[q]}:_S I)=
(I_{\alpha_1}^{[q]}:_S I) \cap \cdots \cap (I_{\alpha_s}^{[q]}:_S
I)$$
To obtain the inclusion $\supseteq$ in the statement notice that, for each face ideal $I_{\alpha_i}$,
$$(I_{\alpha_i}^{[q]}:_S
I)=(I_{\alpha_i}^{[q]}:_S I_{\alpha_1} \cap \cdots \cap I_{\alpha_s})\supseteq \sum_{j=1}^r (I_{\alpha_i}:_S I_{\alpha_j})=I_{\alpha_i}^{[q]}+({\bf
x}^{\alpha_i})^{q-1},$$ where the last equality comes from Lemma $3.1$. On the other direction, recall that for each $I_{\alpha_i} $ we constructed the ideal $I_i\subset I$. Then
$$(I_{\alpha_i}^{[q]}:_S I) \subseteq (I_{\alpha_i}^{[q]}:_S I_i)$$

\noindent The generators of $I_i$ of degree $r$ are $x_{i_1}x_{i_2}\cdots x_{i_r}$ satisfying $i_1\in {\rm supp}_{+}(\alpha_i), {i_2},\dots,i_r\not\in {\rm supp}_{+}(\alpha_i)$. Therefore, from the fact that
$$(I_{\alpha_i}^{[q]}:_S x_{i_1}\cdots x_{i_r})= \frac{1}{x_{i_1}\cdots x_{i_r}}(I_{\alpha_i}^{[q]}\cap (x_{i_1}\cdots x_{i_r}))=\begin{cases} I_{\alpha_i}^{[q]}+\langle x_{i_1}^{q-1}\rangle,\text{ if }
\hlt(I_{\alpha_i})>1,\\ I_{\alpha_i}^{[q-1]},\text{ if }\hlt(I_{\alpha_i})=1.\end{cases}$$ and that for each generator of $I_{\alpha_i}$ we have one generator of $I_i$ we obtain that
$$(I_{\alpha_i}^{[q]}:_S I) \subseteq (I_{\alpha_i}^{[q]}:_S I_i)= \bigcap (I_{\alpha_i}^{[q]}:_S x_{i_1}\cdots x_{i_r})=I_{\alpha_i}^{[q]}+({\bf
x}^{\alpha_i})^{q-1}$$
\end{proof}

Note that under the hypothesis of the above proposition we have in fact the equality
$$(I^{[q]}:_S I)= (I_{\alpha_1}^{[q]}:_S I_{\alpha_1}) \cap \cdots \cap (I_{\alpha_s}^{[q]}:_S
I_{\alpha_s})$$
In the unmixed case this equality formula may also be deduced from R.~Fedder \cite[Lemma 4.1]{Fed}.

\subsection{Consequences}

Let $I=I_{\alpha_1} \cap \cdots \cap I_{\alpha_s}$ be the minimal
primary decomposition of a squarefree monomial ideal $I\subseteq
S$ and $R=S/I$ the corresponding Stanley-Reisner ring. From the formula $$(I^{[q]}:_S I)=  (I_{\alpha_1}^{[q]}+({\bf
x}^{\alpha_1})^{q-1}) \cap \cdots \cap (I_{\alpha_s}^{[q]}+({\bf
x}^{\alpha_s})^{q-1})$$ we can easily deduce that
the Frobenius algebra $\cF(E_R)$ is either infinitely generated or principally generated.
If we take a close look we see that only the following situations are possible\footnote{For simplicity we will assume that $\fM=I_{\alpha_1} + \cdots + I_{\alpha_s}$.}:
\begin{itemize}

\item[(i)] If $\hlt(I_{\alpha_i}) > 1$ for all $i=1,\dots,s$ then
$$(I^{[q]}:I)= I^{[q]} + J_q + ({\bf x}^{\bf 1})^{q-1},$$ where the generators ${\bf x}^{\gamma}=x_1^{\gamma_1}\cdots
x_n^{\gamma_n}$ of $J_q$ satisfy $\gamma_i\in \{0,q-1,q\}$. Notice that ${\bf x}^{\gamma} \in ({\bf x}^{\bf 1})^{q-1}$ in the case  when $\gamma_i\neq 0$ $\forall i$ and that we may also have ${\bf x}^{\gamma} \in I^{[q]}$  when a generator ${\bf x}^{\alpha}$ of $I^{[q]}$ divides ${\bf x}^{\gamma}$. In particular we end up with only two possibilities depending whether $J_q$ is contained in $ I^{[q]} + ({\bf x}^{\bf 1})^{q-1}$ or not.

\vskip 2mm

\begin{itemize}

\item[(a)] $(I^{[q]}:I)= I^{[q]} + ({\bf x}^{\bf 1})^{q-1}$

\vskip 2mm

\item[(b)] $(I^{[q]}:I)= I^{[q]} + J_q + ({\bf x}^{\bf 1})^{q-1}$

\end{itemize}

\vskip 2mm
\noindent In the later case, there exists a generator ${\bf x}^{\gamma}$ of $J_q$ having $\gamma_i=q$, $\gamma_j=q-1$, $\gamma_k=0$ for $1\leq i,j,k \leq n$.

\vskip 2mm

\item[(ii)] If $\hlt(I) = 1 $ and there is $i\in \{1,\dots,s\}$ such that $\hlt(I_{\alpha_i}) > 1$ then
$$(I^{[q]}:I)= J'_q + ({\bf x}^{\bf 1})^{q-1}.$$ In this case, at least there exists a generator ${\bf x}^{\gamma}$ of $J'_q$ such that ${\bf x}^{\gamma} \not\in I^{[q]}+({\bf x}^{\bf 1})^{q-1}$.

\vskip 2mm

\item[(iii)] If $\hlt(I_{\alpha_i}) = 1$ for all $i=1,\dots,s$ then
$$(I^{[q]}:I)= ({\bf x}^{\bf 1})^{q-1}.$$ Notice that in this case $R=S/I$ is Gorenstein.

\end{itemize}

\vskip 2mm

\begin{proposition}
With the previous assumptions:

\begin{itemize}
\item[$\cdot$] $\cF(E_R)\cong S[({\bf x}^{\bf 1})^{(p-1)}\theta;F]$ is principally generated in cases $(i.a)$ and $(iii)$.

\item[$\cdot$]  $\cF(E_R)$ is infinitely generated in cases $(i.b)$ and $(ii)$.

\end{itemize}
\end{proposition}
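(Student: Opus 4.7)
The plan is to handle the principally-generated cases by a direct appeal to the Lemma in Section~2, and the infinitely-generated cases by a Katzman-style argument showing that genuinely new generators keep appearing at every level $e$.

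For cases $(i.a)$ and $(iii)$, I would take $u = {\bf x}^{\bf 1} = x_1\cdots x_n$ and verify the hypothesis of the Lemma, namely $(I^{[q]}:_S I) = I^{[q]} + (u^{q-1})$ for every $q = p^e$. In case $(i.a)$ this is exactly the formula coming out of the case analysis. In case $(iii)$, the fact that every minimal prime $I_{\alpha_i}$ is principal, together with the standing assumption $\fM = I_{\alpha_1} + \cdots + I_{\alpha_s}$, forces $I = ({\bf x}^{\bf 1})$. Hence $I^{[q]} = (({\bf x}^{\bf 1})^q) \subseteq (({\bf x}^{\bf 1})^{q-1})$, so the formula $(I^{[q]}:_S I) = ({\bf x}^{\bf 1})^{q-1}$ can be rewritten trivially as $I^{[q]} + (u^{q-1})$. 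Applying the Lemma then yields the isomorphism $\cF(E_R) \cong R[({\bf x}^{\bf 1})^{p-1}\theta;F]$ in both situations.

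For cases $(i.b)$ and $(ii)$, assume for contradiction that $\cF(E_R)$ is generated as an $R$-algebra by $\bigoplus_{e \leq N}\cF^e(E_R)$ for some fixed $N$. Using the multiplication rule $(g\theta^a)(h\theta^b) = gh^{p^a}\theta^{a+b}$, any element of $\cF^e(E_R)$ with $e > N$ must be expressible, modulo $I^{[p^e]}$, as a finite sum of products
\[
v_0\cdot v_1^{p^{e_1}}\cdot v_2^{p^{e_1+e_2}}\cdots v_k^{p^{e_1+\cdots+e_{k-1}}},
\qquad e_j \leq N,\ \sum_j e_j = e,\ v_j \in (I^{[p^{e_j}]}:_S I).
\]
On the other hand, in both cases $(i.b)$ and $(ii)$ our formula produces, for every $q = p^e$, a monomial generator ${\bf x}^{\gamma}$ of $(I^{[q]}:_S I)$ whose exponent vector exhibits a coordinate equal to $0$, a coordinate equal to $q-1$, and (in case $(i.b)$) a coordinate equal to $q$, and which lies outside $I^{[q]} + ({\bf x}^{\bf 1})^{q-1}$. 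The strategy is to show that such ${\bf x}^{\gamma}$ cannot appear, modulo $I^{[q]}$, in any finite sum of products of the above form when $e$ is large enough compared to $N$.

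The crucial step, and the main obstacle, is a base-$p$ bookkeeping argument: since each $v_j$ may be taken monomial (the ideals involved are monomial, and matching coefficients in a monomial expansion respects the grading), a product of the above form contributes a monomial whose exponent vector is $\beta_0 + p^{e_1}\beta_1 + \cdots + p^{e_1+\cdots+e_{k-1}}\beta_{k-1}$, with each $\beta_j$ a legal exponent for $(I^{[p^{e_j}]}:_S I)$ as described in Proposition~3.2. The coordinate $\gamma_k = 0$ forces the corresponding coordinate of every $\beta_j$ to vanish, while the mixed appearance of a coordinate equal to $q$ and a coordinate equal to $q-1$ in $\gamma$ constrains the non-zero digits in base $p$. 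When $e$ exceeds $N$ sufficiently, no admissible choice of $\beta_j$'s with $e_j \leq N$ can simultaneously realise these digit patterns, contradicting the assumed finite generation. Implementing this combinatorial analysis carefully (and verifying that the reduction to monomial factors really is lossless modulo $I^{[p^e]}$) is the delicate part, and is where the argument genuinely extends Katzman's construction in \cite{Kat10} beyond his particular example.
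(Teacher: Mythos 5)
Your first bullet is handled exactly as in the paper: both cases reduce to Lemma~2.2 with $u={\bf x}^{\bf 1}$ (in case (iii) noting $I=({\bf x}^{\bf 1})$ so that $({\bf x}^{\bf 1})^{q-1}=I^{[q]}+(({\bf x}^{\bf 1})^{q-1})$), and that part is complete.

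For the second bullet there is a genuine gap: you set up the correct framework --- every element of $\cF^e(E_R)$ lying in the subalgebra generated in lower degrees must be a sum of twisted products $v_0v_1^{p^{e_1}}\cdots$ --- but the decisive step is exactly the one you defer (``implementing this combinatorial analysis carefully\dots is the delicate part''). As written, nothing is actually proved. Two specific points. First, the worry you flag about whether the reduction to sums of monomial products is ``lossless modulo $I^{[p^e]}$'' is resolved in the paper by quoting Katzman's criterion: $\cF_{<e}\cap\cF^e(E_R)=L_e$, where $L_e=\sum K_{\beta_1}K_{\beta_2}^{[p^{\beta_1}]}\cdots K_{\beta_s}^{[p^{\beta_1+\cdots+\beta_{s-1}}]}$ with all $\beta_j\geq 1$, $\sum\beta_j=e$. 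This converts the problem into a concrete monomial-ideal membership question; you would need to state and use (or reprove) this. Second, the obstruction itself is much simpler than the base-$p$ digit-pattern analysis you envision, and it is uniform in $e$ rather than requiring ``$e$ sufficiently large compared to $N$'': since $L_e$ is a sum of monomial ideals, ${\bf x}^{\gamma}=x_1^{q}x_2^{q-1}x_4^{\gamma_4}\cdots x_n^{\gamma_n}\in L_e$ would force some single product of generators to divide it; the coordinate $\gamma_3=0$ eliminates all factors except the distinguished generators $G_{\beta_j}=(x_1^{p^{\beta_j}}x_2^{p^{\beta_j}-1}\cdots)$, and the $x_1$-exponent of $\prod_j G_{\beta_j}^{[p^{\beta_1+\cdots+\beta_{j-1}}]}$ is $\sum_{j=1}^{s}p^{\beta_1+\cdots+\beta_j}>p^e=q$ because $s\geq 2$. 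So no summand of $L_e$ can contain ${\bf x}^{\gamma}$, and each graded piece $\cF^e(E_R)$ requires a new generator. Your proposal gestures at the right kind of bookkeeping but never produces this (or any) concrete inequality, so the infinite-generation claim remains unestablished in your argument.
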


\vskip 2mm

The first part follows from Lemma $2.2$. We will see that the arguments used by M.~Katzman in \cite{Kat10} can be generalized  to obtain the second part.

\subsubsection{M.~Katzman's criterion}

Let $S$ be a regular complete local ring containing a field of characteristic $p>0$. Let $I\subseteq S$ be an ideal and $E_R$ be the injective hull of the residue field of $R=S/I$. For any $e\in \bN$ denote $K_e:=(I^{[p^e]}:_R I)$ and
$$L_e:= \sum_{{\tiny
\begin{tabular}{l}
$1\leq\beta_1,\dots ,\beta_s<e$ \\
$\beta_1+\cdots+\beta_s=e$
\end{tabular}}} K_{\beta_1}K_{\beta_2}^{[{p^{\beta_1}}]}K_{\beta_3}^{[{p^{\beta_1+\beta_2}}]}\cdots K_{\beta_s}^{[{p^{\beta_1+\cdots+\beta_{s-1}}}]}$$

The key result he used is:

\begin{proposition}[\cite{Kat10}]
For any $e\geq1$, let $\cF_{<e}$ be the $R$-subalgebra of $\cF(E_R)$ generated by $\cF^0(E_R),\dots ,\cF^{e-1}(E_R)$. Then
 $$\cF_{<e} \cap \cF^e(E_R) = L_e$$

\end{proposition}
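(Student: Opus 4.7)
The plan is to work in the skew-polynomial description of $\cF(E_R) = \bigoplus_{e \geq 0} K_e/I^{[p^e]}$ recalled in Section 2: if $\phi \in \cF^a(E_R)$ and $\psi \in \cF^b(E_R)$ are represented by $u \in K_a$ and $v \in K_b$ respectively, then the composition $\phi \circ \psi \in \cF^{a+b}(E_R)$ is represented by $u \cdot v^{p^a}$ modulo $I^{[p^{a+b}]}$ (this encodes the rule $\theta r = r^p \theta$). Iterating, a composition $\phi_1 \circ \cdots \circ \phi_s$ with $\phi_i$ represented by $u_i \in K_{\beta_i}$ is represented by
\[
u_1 \cdot u_2^{p^{\beta_1}} \cdot u_3^{p^{\beta_1+\beta_2}} \cdots u_s^{p^{\beta_1+\cdots+\beta_{s-1}}} \pmod{I^{[p^{e}]}},
\]
where $e=\beta_1+\cdots+\beta_s$.

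For the inclusion $L_e \subseteq \cF_{<e} \cap \cF^e(E_R)$, pick any generator of an ideal product $K_{\beta_1} \cdot K_{\beta_2}^{[p^{\beta_1}]} \cdots K_{\beta_s}^{[p^{\beta_1+\cdots+\beta_{s-1}}]}$ indexing the sum defining $L_e$ (so $1 \leq \beta_i < e$ and $\sum_i \beta_i = e$). Since the bracket ideal $K_{\beta_i}^{[p^{\beta_1+\cdots+\beta_{i-1}}]}$ is generated by $p^{\beta_1+\cdots+\beta_{i-1}}$-th powers of elements of $K_{\beta_i}$, such a generator has exactly the form displayed above. Thus, modulo $I^{[p^e]}$, it represents the composition of $s$ operators from $\cF^{\beta_1}(E_R),\ldots,\cF^{\beta_s}(E_R)\subset \cF_{<e}$, and so lies in $\cF_{<e} \cap \cF^e(E_R)$. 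Since $\cF^0(E_R) = R$ acts as degree-zero multiplication, the $S$-coefficients produced when expanding a sum in the ideal product descend to $R$-multiples, and the entire image of $L_e$ in $K_e/I^{[p^e]}$ is captured.

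For the reverse inclusion, any element of $\cF_{<e}\cap \cF^e(E_R)$ is an $R$-linear combination of compositions $\phi_1 \circ \cdots \circ \phi_s$ with $\phi_i \in \cF^{\beta_i}(E_R)$, $0 \leq \beta_i \leq e-1$, and $\sum_i \beta_i = e$. Any factor with $\beta_i = 0$ lies in $\cF^0(E_R) = R$ and can be absorbed into a neighbouring positive-degree factor via the skew-commutation rule, so we may assume $1 \leq \beta_i \leq e-1$ throughout. Representing each $\phi_i$ by $u_i \in K_{\beta_i}$, the iterated multiplication formula shows that the composition is represented by $u_1 \cdot u_2^{p^{\beta_1}} \cdots u_s^{p^{\beta_1+\cdots+\beta_{s-1}}}$, which by construction lies in the ideal product corresponding to the composition $(\beta_1,\ldots,\beta_s)$. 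Summing over all compositions of $e$ into parts at most $e-1$ exhausts exactly $L_e$ modulo $I^{[p^e]}$.

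The main conceptual point where care is needed is the bookkeeping between the ideal-theoretic definition of $L_e$ (a sum of products of bracket ideals inside $S$) and the composition-theoretic definition of $\cF_{<e}$: one must verify that the shifts in the bracket powers $K_{\beta_i}^{[p^{\beta_1+\cdots+\beta_{i-1}}]}$ are precisely those introduced by iterated skew-commutation, and that passing from $S$-coefficients in the ideal product to $R$-module multiples in $\cF^e(E_R)$ is legitimate. The latter follows because $I\cdot K_e \subseteq I^{[p^e]}$ directly from $K_e = (I^{[p^e]}:_S I)$, so the $S$-module structure on $K_e/I^{[p^e]}$ descends canonically to $R$.
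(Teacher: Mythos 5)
Your proof is correct, and since the paper states this result without proof (quoting it directly from Katzman \cite{Kat10}), the right comparison is with Katzman's original argument, which is essentially what you give: identify $\cF^a(E_R)$ with $K_aF^a$ and use the composition rule $uF^a\circ vF^b=uv^{p^a}F^{a+b}$ to match degree-$e$ words in lower-degree homogeneous elements with the summands of $L_e$. The two points that need care --- absorbing degree-zero factors into adjacent positive-degree ones, and passing between $S$-coefficients and $R$-multiples via $I\cdot K_e\subseteq I^{[p^e]}$ --- are both handled in your write-up, so nothing is missing.
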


For his very particular example, that is $I=(x_1x_2,x_1x_3)$, he checked that for all $e\geq 1$, the element $x_1^{q}x_2^{q-1} \in K_e$ does not belong to $L_e$. Therefore $\cF^e(E_R)$ is not in $\cF_{<e}$.

\vskip 2mm

In the situation of cases $(i.b)$ and $(ii)$  we may always assume, without loss of generality, that there exists a generator ${\bf x}^{\gamma}\in K_e$ showing up in $ J_q$ or $J'_q$  such that $\gamma_1 = q, \gamma_2 = q-1, \gamma_3 = 0$ and so has the form ${\bf x}^{\gamma}=x_1^{q}x_2^{q-1}x_4^{\gamma_4}\cdots x_n^{\gamma_n}$ with $\gamma_i\in \{0,q-1,q\}$, $i=4,\dots,n$. The proof that ${\bf x}^{\gamma}\not \in L_e$ is almost verbatim, we include it here for completeness. $L_e$ is a sum of monomial ideals so ${\bf x}^{\gamma}\in L_e$ if and only if ${\bf x}^{\gamma}$ is in one of the summands. Fix $e\geq 1$ and $1\leq\beta_1,\dots, \beta_s<e$ such that $\beta_1+\cdots+\beta_s=e$. We will show that the ideal
$$K_{\beta_1}K_{\beta_2}^{[{p^{\beta_1}}]}K_{\beta_3}^{[{p^{\beta_1+\beta_2}}]}\cdots K_{\beta_s}^{[{p^{\beta_1+\cdots+\beta_{s-1}}}]}$$ does not contain ${\bf x}^{\gamma}$. In fact, it is enough to show that $$G_{\beta_1}G_{\beta_2}^{[{p^{\beta_1}}]}G_{\beta_3}^{[{p^{\beta_1+\beta_2}}]}\cdots G_{\beta_s}^{[{p^{\beta_1+\cdots+\beta_{s-1}}}]},$$ where $G_e:= (x_1^{q}x_2^{q-1}x_4^{\gamma_4}\cdots x_n^{\gamma_n})$ does not contain ${\bf x}^{\gamma}$. The exponent of $x_1$ in the generator of the product above is $$p^{\beta_1+(\beta_1+\beta_2)+\dots+(\beta_1+\dots+\beta_s)}>p^{\beta_1+\dots+\beta_s}=p^e$$
where the inequality follows from the fact that we must have $s>1$.

\vskip 2mm

Observe that any generator of $(I^{[q]}:_R I)$ of the form considered above provides a new generator for the Frobenius algebra.

\subsubsection{Final remarks}
In the case of an infinitely generated Frobenius algebra $\cF(E_R)$ we have some control on the number of generators of each graded piece $\cF^e(E_R)$:

\vskip 2mm

\begin{itemize}

\item [$\cdot$] The formula we obtain for $(I^{[q]}:I)$ is exactly the same $\forall q$, so we only have to compute $(I^{[p]}:I)$, i.e. the first graded piece $\cF^1(E_R)$,
to describe the whole Frobenius algebra.

    \vskip 2mm

\item [$\cdot$] Assume that  $\cF^1(E_R)$ has $\mu +1$ generators, $\mu$ generators coming from $J_p$ or $J'_p$ in cases $(i.b)$ and $(ii)$ and the other one being
$({\bf x}^{\bf 1})^{p-1}$. Then, each piece $\cF^e(E_R)$ adds up $\mu$ new generators, those coming from the corresponding $J_q$ or $J'_q$.

    \vskip 2mm

\item [$\cdot$] This number $\mu$ can be particularly large, take for example an ideal $I= I_{\alpha_1} \cap \cdots \cap I_{\alpha_s}$ with disjoint variables. Then one can check that $$\mu=\prod_{i=1}^s (|\alpha_i|+1) - \prod_{i=1}^s |\alpha_i| -1.$$


\end{itemize}

\section{Examples}

The Frobenius algebra $\cF(E_R)$ of the injective hull of the residue field of $R$ is principally generated for local Gorenstein rings \cite{LS} so, by duality, if $R$ is complete and $F$-finite the Cartier algebra $\cC(R)$ of a Gorenstein ring $R$ is also principally generated. The converse holds true for $F$-finite normal rings (see \cite{Bli09}). The case of squarefree monomial ideals we are dealing with are in general non-normal
so we will discuss examples at the boundary of the Gorenstein property. It turns out
that one may find examples that, from a combinatorial point of view, are very close to being Gorenstein with infinitely generated associated
algebras and examples of principally generated Frobenius and Cartier algebras that are not even Cohen-Macaulay.

\vskip 2mm

Let $I=I_{\alpha_1} \cap \cdots \cap I_{\alpha_s}$ 
be the minimal
primary decomposition of a squarefree monomial ideal of height $r$. The Gorenstein property of this type of ideals can be described using the pieces that appear in the well known Hochster's
formula\footnote{Hochster's formula gives a description of the Hilbert function of
the local cohomology modules $H_{\fM}^{r}(R/I)$ (see \cite{Sta96})}. It reads off as $I$ is Gorenstein if and only if

\vskip 2mm

$\dim_k[\widetilde{H}_{n-r-|\alpha|-1}({\rm link}_\alpha \Delta;
k)]=1$ for all $\alpha \in \{0,1\}^n$ such that $\alpha \geq
\alpha_i$, $i=1,\dots,s$.

\vskip 2mm

\noindent Here $\Delta$ denotes the simplicial complex on the set of
vertices $\{x_1,\dots,x_n\}$ corresponding to  $I$ via the
Stanley-Reisner correspondence and, given a face
$\sigma_{\alpha}:=\{x_i \hskip 2mm | \hskip 2mm \alpha_i=1\}\in
\Delta$, the link of $\sigma_{\alpha}$ in $\Delta$ is \hskip 2mm
$${\rm link}_{{\alpha}}\Delta:=\{\tau \in \Delta \hskip 2mm | \hskip
2mm \sigma_{\alpha} \cap \tau = \emptyset, \hskip 2mm
\sigma_{\alpha} \cup \tau \in \Delta \}.$$

\vskip 2mm

\subsection{Examples with pure height}
 Cohen-Macaulay, in particular Gorenstein rings, are unmixed so we will start considering ideals  such that all the ideals in the minimal primary decomposition have the same height. Moreover we will only consider ideals involving all the variables, i.e.  $\fM=I_{\alpha_1} + \cdots + I_{\alpha_s}$.


\vskip 2mm

In the following table we treat the cases of $3$, $4$ and $5$ variables. Depending on the height we count the number of ideals (up to relabeling) having principally generated (p.g.) Frobenius algebra, how many Gorenstein (Gor) rings we get among them and the number of ideals having infinitely generated (i.g.) Frobenius algebra. All the computations were performed with the help of {\tt CoCoA} \cite{C}:

$$
\begin{tabular}{|c|c|c|c|}\hline
 $n=3$ & p.g.  & ${\rm Gor}$ & i.g.  \\
 \hline
  $\hlt I=1$ & $1$ & $1$ & - \\
  $\hlt I=2$ & $2$ & $1$ & - \\
  $\hlt I=3$  & $1$ & $1$ & -  \\ \hline
\end{tabular}  \hskip .5cm
\begin{tabular}{|c|c|c|c|}\hline
 $n=4$ & p.g.  & ${\rm Gor}$ & i.g.  \\
 \hline
  $\hlt I=1$ & $1$ & $1$ & - \\
  $\hlt I=2$ & $4$ & $2$ & $3$ \\
  $\hlt I=3$ & $3$ & $1$ & - \\
  $\hlt I=4$ & $1$ & $1$ & -  \\ \hline
\end{tabular}
\hskip .5cm
\begin{tabular}{|c|c|c|c|}\hline
 $n=5$ & p.g.  & ${\rm Gor}$ & i.g.  \\
 \hline
  $\hlt I=1$ & $1$ & $1$ & - \\
  $\hlt I=2$ & $6$ & $2$ & $13$ \\
  $\hlt I=3$ & $12$ & $2$ & $10$ \\
  $\hlt I=4$ & $4$ & $1$ & - \\
  $\hlt I=5$ & $1$ & $1$ & -  \\ \hline
\end{tabular}
$$

\vskip 2mm

We point out that the unique ideals of pure height $1$ and $n$  are $I=(x_1\cdots x_n)$ and $I=(x_1,\dots ,x_n)$ respectively and they are Gorenstein.

\vskip 2mm

In three variables we have an example of a non-Gorenstein ring having principally generated Frobenius algebra: $$I=(x,y)\cap(x,z)\cap(y,z)=(xy,xz,yz) $$
It is not Gorenstein since the piece in Hochster's formula corresponding to
$\alpha=(1,1,1)$ has dimension two. We should point out that this example is
not even $\bQ$-Gorenstein.\footnote{We thank Anurag Singh for this
remark.} 
The canonical module $\omega_{R}$ is isomorphic to the height one
ideal  $(x+y,y+z)$ in the one dimensional ring $R$. Thus
$\omega_{R}^{(n)}=\omega_{R}^n$ $\forall n\geq 1$ and, for $n\geq 2$,
$\omega_{R}^n=(x^n,y^n,z^n)$ is not principal.

\vskip 2mm

In four variables we have three examples with infinitely generated Frobenius algebra:
\begin{itemize}

\item [$\cdot$] $I=(x,y)\cap(z,w)=(xz,xw,yz,yw)$

\item [$\cdot$] $I=(x,y)\cap(x,w)\cap(y,z)=(xy,xz,yw)$

\item [$\cdot$] $I=(x,y)\cap(x,w)\cap(y,w)\cap(z,w)=(xyz,xw,yw)$

\end{itemize}

\vskip 2mm

 The first example has disjoint variables and the corresponding colon ideal is
$$(I^{[q]}:_R I)=(x^qz^q,x^qw^q,y^qz^qy^qw^q,\underline{x^{q-1}y^{q-1}z^q},\underline{x^{q}z^{q-1}w^{q-1}},\underline{y^{q}z^{q-1}w^{q-1}},\underline{x^{q-1}y^{q-1}w^q}, (xyzw)^{q-1})$$

\vskip 2mm

 The second example is very close to being Gorenstein. The pieces corresponding to
$\alpha=(0,1,1,1)$ and $\alpha=(1,1,1,1)$ are zero and we have
$$(I^{[q]}:_R I)=(x^qy^q,x^qz^q,y^qw^q,\underline{x^{q}y^{q-1}z^{q-1}}, \underline{x^{q-1}y^{q}w^{q-1}},
(xyzw)^{q-1})$$

\vskip 2mm

The third example is very close to being Gorenstein in the sense that the piece
corresponding to $\alpha=(1,1,1,0)$ is zero and the one
corresponding to $\alpha=(1,1,0,1)$ has dimension two. We have
$$(I^{[q]}:_R I)=(x^qy^qz^q,x^qw^q,y^qw^q,\underline{x^{q-1}y^{q-1}w^q}, (xyzw)^{q-1})$$

\vskip 2mm

The examples with pure height $n-1$ that we computed have principally generated Frobenius algebra. It is easy to check out that this is a general fact.

\vskip 2mm

\begin{proposition}
Let $I\subseteq k[[x_1,...,x_n]]$ be a squarefree monomial of pure height $n-1$. Then, the Frobenius algebra $\cF(E_R)$ is principally generated.

\end{proposition}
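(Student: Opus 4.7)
The plan is to prove that under the pure height $n-1$ hypothesis we fall in case $(i.a)$ of the analysis following Proposition $3.2$, so that $(I^{[q]}:_S I) = I^{[q]} + ({\bf x}^{\bf 1})^{q-1}$ for every $q = p^e$, and then invoke Lemma $2.2$ with $u = x_1 \cdots x_n$ to conclude $\cF(E_R) \cong R[({\bf x}^{\bf 1})^{p-1}\theta;F]$. Under the hypothesis, each face ideal has the form $I_{\alpha_i} = (x_k : k \neq j_i)$ for a unique $j_i$, and minimality of the primary decomposition forces $j_1,\dots,j_s$ to be pairwise distinct. The degenerate case $s = 1$ reduces to a discrete valuation ring: Lemma $3.1$(i) gives $(I^{[q]}:_S I) = I^{[q]} + ({\bf x}^{\alpha_1})^{q-1}$, and Lemma $2.2$ applies directly with $u = {\bf x}^{\alpha_1}$. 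So from now on I assume $s \geq 2$; note that this automatically gives $\fM = I_{\alpha_1} + \cdots + I_{\alpha_s}$, placing us in the framework of Section $3.1$.

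Since the colon ideal is monomial and the inclusion $I^{[q]} + ({\bf x}^{\bf 1})^{q-1} \subseteq (I^{[q]}:_S I)$ is a routine check on generators of $I$, the task reduces to showing that any monomial ${\bf x}^{\gamma} \in (I^{[q]}:_S I)$ which does not lie in $({\bf x}^{\bf 1})^{q-1}$ must already lie in $I^{[q]}$. Pick such a ${\bf x}^{\gamma}$, fix an index $k^*$ with $\gamma_{k^*} \leq q-2$, and set $K := \{k : \gamma_k \geq q\}$. The crucial combinatorial fact is that, because $I = \bigcap_i I_{\alpha_i}$ and $I_{\alpha_i} = (x_k : k \neq j_i)$, the squarefree monomial $\prod_{k \in K} x_k$ lies in $I_{\alpha_i}$ precisely when $K \not\subseteq \{j_i\}$. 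Hence if we can force $K \not\subseteq \{j_i\}$ for every $i$, the product $\prod_{k \in K} x_k$ belongs to $I$, a minimal generator of $I$ divides it, and raising that generator to the $q$-th power produces a divisor of ${\bf x}^{\gamma}$, giving ${\bf x}^{\gamma} \in I^{[q]}$.

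To force $K \not\subseteq \{j_i\}$ for every $i$, I would unpack the membership condition from Proposition $3.2$: for each $i$, either (A) there is some $k \neq j_i$ with $\gamma_k \geq q$, or (B) $\gamma_k \geq q-1$ for every $k \neq j_i$. Whenever $j_i \neq k^*$, option (B) would force $\gamma_{k^*} \geq q-1$, contradicting the choice of $k^*$; so (A) holds for such $i$ and $K \setminus \{j_i\} \neq \emptyset$. The only possibly troublesome index is a (unique) $i_0$ with $j_{i_0} = k^*$, and the main obstacle is to rule out the scenario in which (A) fails at $i_0$. If (A) failed at $i_0$, then (B) would hold at $i_0$ and no exponent $\gamma_k$ with $k \neq k^*$ could reach $q$, forcing $\gamma_k = q-1$ for every $k \neq k^*$; but for any $i \neq i_0$ (which exists because $s \geq 2$), (A) requires some $\gamma_k \geq q$ with $k \neq j_i$, and the only candidate is $k = k^*$, contradicting $\gamma_{k^*} \leq q-2$. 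This rules out the exceptional configuration, so (A) also holds at $i_0$, yielding $K \not\subseteq \{j_i\}$ for every $i$ and completing the proof.
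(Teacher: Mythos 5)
Your argument is correct and is exactly the verification the paper's framework calls for (the paper states this proposition without proof, merely asserting it is "easy to check"): you use Proposition $3.2$ to show that a pure height $n-1$ ideal always satisfies $(I^{[q]}:_S I)=I^{[q]}+({\bf x}^{\bf 1})^{q-1}$, i.e.\ falls into case $(i.a)$ (or the degenerate cases $s=1$ and $(iii)$), and then conclude by Lemma $2.2$. The combinatorial step — ruling out a generator of $J_q$ outside $I^{[q]}+({\bf x}^{\bf 1})^{q-1}$ by analyzing the conditions $\gamma_k\geq q$ versus $\gamma_k\geq q-1$ component by component, including the delicate index $i_0$ with $j_{i_0}=k^*$ — is handled correctly.
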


\vskip 2mm

Another set of examples with principally generated Frobenius algebra is given by the ideals $$I_{k,n}:=\bigcap_{1\leq i_1<\cdots <i_k \leq n}(x_{i_1},\dots , x_{i_k}).$$ Their Alexander duals are also of this form, namely we have $I_{k,n}^{\vee}=I_{n-k+1,n}$. In general, having principally generated Frobenius algebra does not behave well for Alexander duality: take for instance $I=(x,y)\cap (z,w)$ and $I^{\vee} = (x,z) \cap (x,w) \cap (y,z) \cap (y,w)$.

\begin{proposition}
Let $I_{k,n}\subseteq k[[x_1,...,x_n]]$ be the squarefree monomial ideal obtained as intersection of all the face ideals of pure height $k$. Then, the Frobenius algebra $\cF(E_R)$ is principally generated.

\end{proposition}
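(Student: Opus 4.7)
The plan is to invoke Proposition 3.3: it suffices to show that $I=I_{k,n}$ falls into either case (i.a) or case (iii) of the classification, so that $\cF(E_R)$ is principally generated. The case $k=1$ is immediate, since $I_{1,n}=(x_1\cdots x_n)$ is principal and $R=S/I$ is a hypersurface, hence Gorenstein, placing us in case (iii). For $2\leq k\leq n$ every face ideal in the primary decomposition has height $k\geq 2$, so we are in case (i), and the task is to exclude case (i.b). Equivalently, I must show that every monomial ${\bf x}^\gamma$ with $\gamma_i\in\{0,q-1,q\}$ lying in $(I^{[q]}:_S I)$ already lies in $I^{[q]}+({\bf x}^{\bf 1})^{q-1}$.

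I would argue by contradiction. Suppose such an ${\bf x}^\gamma$ exists with neither ${\bf x}^\gamma\in I^{[q]}$ nor ${\bf x}^\gamma\in ({\bf x}^{\bf 1})^{q-1}$. Partition the index set via $A=\{i:\gamma_i=0\}$, $B=\{i:\gamma_i=q-1\}$, $C=\{i:\gamma_i=q\}$, with cardinalities $a,b,c$. From ${\bf x}^\gamma\notin({\bf x}^{\bf 1})^{q-1}$ we get $a\geq 1$. The minimal generators of $I_{k,n}$ are the squarefree monomials of degree $n-k+1$, so ${\bf x}^\gamma\notin I^{[q]}$ translates into $c\leq n-k$, i.e.\ $a+b\geq k$.

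The key combinatorial step is to exhibit a face ideal $I_\alpha$ of height $k$ such that ${\bf x}^\gamma\notin I_\alpha^{[q]}+({\bf x}^\alpha)^{q-1}$, contradicting the intersection formula of Proposition 3.2. If $a\geq k$, pick $\supp_{+}(\alpha)\subseteq A$ with $|\alpha|=k$; then every exponent of ${\bf x}^\gamma$ on $\supp_{+}(\alpha)$ vanishes, ruling out both summands. If $a<k$, use $a+b\geq k$ to choose $\supp_{+}(\alpha)=A\cup B'$ with $B'\subseteq B$ and $|B'|=k-a$; then $A\subseteq\supp_{+}(\alpha)$ blocks divisibility by $({\bf x}^\alpha)^{q-1}$ (an exponent at $A$ is zero), while $\supp_{+}(\alpha)\cap C=\emptyset$ blocks membership in $I_\alpha^{[q]}$.

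The main obstacle is guaranteeing that such an $\alpha$ of height \emph{exactly} $k$ can be produced in every sub-case; this is achieved precisely by the bound $a+b\geq k$ forced by ${\bf x}^\gamma\notin I^{[q]}$, together with the trivial bound $a\leq n$. Once the contradiction is reached, Proposition 3.3 yields $\cF(E_R)\cong S[({\bf x}^{\bf 1})^{p-1}\theta;F]$, which is principally generated, completing the proof.
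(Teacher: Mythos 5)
Your proof is correct, and it takes a genuinely different route from the paper's. The paper argues recursively: it decomposes $I_{k,n}=I_{k,n-1}\cap J_{k,n}$, where $J_{k,n}=(x_n)+\sum_{|\alpha|=n-k+1}(x_1^{\alpha_1}\cdots x_{n-1}^{\alpha_{n-1}})$ is the intersection of the height-$k$ face ideals containing $x_n$, checks by direct computation that $J_{k,n}$ has principally generated Frobenius algebra, and concludes by induction on the number of variables. You instead verify case (i.a) head-on from the colon formula of Proposition 3.2: the minimal generators of $I_{k,n}$ are the squarefree monomials of degree $n-k+1$, so for a candidate ${\bf x}^{\gamma}$ with exponents in $\{0,q-1,q\}$ the conditions ${\bf x}^{\gamma}\notin({\bf x}^{\bf 1})^{q-1}$ and ${\bf x}^{\gamma}\notin I^{[q]}$ give $a\geq 1$ and $a+b\geq k$, which is exactly what is needed to assemble a $k$-subset $\mathrm{supp}_+(\alpha)$ containing a zero-exponent index and disjoint from $C$; then ${\bf x}^{\gamma}\notin I_{\alpha}^{[q]}+({\bf x}^{\alpha})^{q-1}$, and since \emph{every} height-$k$ face ideal occurs in the decomposition of $I_{k,n}$ (the one place where the special shape of $I_{k,n}$ enters), Proposition 3.2 forces ${\bf x}^{\gamma}\notin(I^{[q]}:_S I)$. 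Your version is self-contained and arguably more complete: the paper's induction leaves implicit why principal generation passes to an intersection (it does not in general, cf. $(x,y)\cap(z,w)$), whereas your argument requires no such step; the trade-off is that the paper's decomposition is shorter to state and reuses the machinery already set up for computing colon ideals of the pieces.
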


\begin{proof}
We have a decomposition $I_{k,n}=  I_{k,n-1}\cap J_{k,n}$, where
$$J_{k,n}=(x_n)+ \sum_{|\alpha|=n-k+1} (x_1^{\alpha_1}\cdots x_{n-1}^{\alpha_{n-1}})$$
A direct computation shows that $J_{k,n}$ has principally generated Frobenius algebra.
The result follows using induction on $d$.
\end{proof}

\subsection{Examples with no pure height} We have seen that the Gorensteinness of the Stanley-Reisner ring $R$
is not the property that tackles when $\cF(E_R)$ is principally
generated. It turns out that one may even find examples of non
Cohen-Macaulay rings with principally generated Frobenius algebra.
In $4$ and $5$ variables we have the following examples:

\vskip 2mm

\begin{itemize}

\item [$\cdot$]  $I=(x, w)\cap (x, z)\cap (x, y)\cap (y,z,w)$

\item [$\cdot$]  $I=(x, u)\cap (x, w)\cap (x, z)\cap(x, y) \cap (y,z,w,u)$

\item [$\cdot$] $I=(x, z)\cap(x, w)\cap (x, u)\cap (y, z)\cap (y, w)\cap(y, u)\cap (z, w, u)$

\item [$\cdot$] $I= (x, y, u)\cap (x, y, w)\cap (x, y, z)\cap (y, z, w, u)$

\item [$\cdot$] $I= (x, y, w)\cap (x, y, u)\cap (x, z, w)\cap (x, z, u)\cap (y, z, w, u)$

\end{itemize}

\vskip 2mm

In general we may find examples in any dimension. Take for example
the families of ideals in $n$ variables

\begin{itemize}

\item [$\cdot$] $I=(x_1,\dots, x_r,x_{r+1}\cdots x_n)\cap (x_{r+1},\dots ,x_n)$

\item [$\cdot$] $I=(x_1,\dots, x_r,x_{r+1}\cdots x_{r_1}, x_{r_1+1}\cdots x_{r_2},
\dots , x_{r_t+1}\cdots x_n )\cap (x_{r+1},\dots ,x_n)$,

\vskip 2mm  \hskip .5cm where $1\leq r < r_1 < \cdots < r_t \leq n$.

\end{itemize}

\section{Applications}

\subsection{Generalized test ideals}
 M.~Blickle developed in \cite{Bli09} the notion of Cartier algebras and used it to define test modules $\tau(M,\mathcal{C})$ associated to pairs $(M, \mathcal{C})$ consisiting of an $R$-Cartier algebra $\mathcal{C}$ and a coherent left $\mathcal{C}$-module $M$. Inspired on previous work by K. Schwede \cite{Sch09} and using the techniques of M. Blickle and G. B\"{o}ckle \cite{BB}, he extended in this way the notion of the generalized test ideals $\tau(R,{\fa}^t)$ of N. Hara and K. Y. Yoshida \cite{HaYo} to a more general setting. In fact, when $R$ is $F$-finite and reduced, by taking $M = R$, $\fa$ an ideal of $R$, and $C$ the $R$-Cartier algebra $\mathcal{C}_R^{\fa^t}$ where $t\in \bR_{\geq 0}$, one gets that $\tau(R,\mathcal{C}_R^{\fa^t}) = \tau(R,{\fa}^t)$, see \cite[Prop 3.21]{Bli09}. He also extended the notion of jumping numbers and proved that when a Cartier algebra is gauge bounded then the set of jumping numbers of the corresponding test modules is discrete \cite[Cor 4.19]{Bli09} and that finitely generated Cartier algebras are gauge bounded \cite[Prop 4.9]{Bli09}.

\vskip 2mm

The aim of this Section is to prove that the Cartier algebra $\cC(R)$ associated to a Stanley-Reisner ring $R$ is gauge bounded even in the case that it is infinitely generated. To this purpose we are only going to introduce the notions that we really require. We refer to \cite{Bli09}, \cite{BSTZ}, \cite{Sch09}, \cite{Sch10} for non explained facts on generalized test ideals on singular varieties. A nice survey can be found in \cite{ST11}.

\vskip 2mm

We first recall what a gauge is. Let $S=k[x_1,\dots,x_n]$ be the polynomial ring in $n$ variables over
a perfect field $k$ (or more in general over an $F$-finite field $k$). For any given $\alpha=(\alpha_1,\dots,\alpha_n) \in\mathbb{N}_0^n$ we consider the maximun norm $||\alpha||:=\max_{j}\alpha_j.$ This norm induces an increasing filtration $\{ S_d \}_{d\in \mathbb{N}_0\cup\{-\infty\}}$ of $k$-subspaces, where  $S_d$ is the $k$-subspace of $S$ generated by monomials ${\bf x}^\alpha$ such that $||\alpha||\leq d$ and $S_{-\infty}:=0$.

\vskip 2mm

Let $M$ be an $S$-module finitely generated by $m_1,\dots,m_k$. The filtration on $S$  induces an increasing filtration on $M$ as follows:
$$ M_d:=\begin{cases} 0,\text{ if }n=-\infty,\\
S_d\cdot\langle m_1,\dots,m_k\rangle,\text{ if }d\in\mathbb{N}_0.
\end{cases}
$$
In this way, one sets
\begin{align*}
& \xymatrix{\delta_M:\ M\ar[r]& \mathbb{N}_0\cup\{-\infty\}}\\
& m\longmapsto\begin{cases} -\infty,\text{ if }m=0,\\
d,\text{ if }m\in M_d\setminus M_{d-1}.\end{cases}
\end{align*}
We say that $\delta=\delta_M$ is a \textit{gauge} for $M$.

\begin{remark}
 $S$ has a gauge $\delta_S$ induced by the generator $1$, i.e. $\delta_S$ is the degree given by grading the variable with the maximal norm.  If $R=S/I$ is a quotient ring, then the generator $1_R$ of $R$ induces a gauge $\delta_R$ which we shall call standard gauge.

\end{remark}

\begin{definition}
The $R$-Cartier algebra $\cC(R)$ is gauge bounded if for each/some gauge
$\delta$ on $R$ there exists a set $\{\psi_i \hskip 1mm | \hskip 1mm
\psi_i \in \cC_{e_i}(R)\}_{i\in I}$ which generates $\cC_{+}(R)$ as a
right $R$-module, and a constant $K$ such that for each $\psi_i$, one
has $\delta(\psi_i(r))\leq \frac{\delta(r)}{p^{e_i}}+\frac{K}{p-1}$,
$r\in R$. We then say that $\frac{K}{p-1}$ is the bound of the gauge.
\end{definition}

Our main result in this Section is the following:

\begin{proposition}
The $R$-Cartier algebra $\cC(R)$ associated to a Stanley-Reisner ring
$R$ over a perfect field is gauge bounded.
\end{proposition}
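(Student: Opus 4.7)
The plan is to combine the explicit description of $\cF(E_R)$ obtained in Section 3 with Matlis duality to get concrete representatives of a right $R$-module generating set for $\cC(R)$, and then to bound their action on the standard gauge directly. By Section 2, $\cC(R)$ is the opposite of $\cF(E_R)$, and the analysis following Proposition 3.2 (cases (i), (ii), (iii)) shows that $(I^{[p^e]}:_S I)$ is generated modulo $I^{[p^e]}$ by monomials $u={\bf x}^\gamma$ whose exponents satisfy $\gamma_i\in\{0,p^e-1,p^e\}$. My first step is to check that each such $u$ corresponds under Matlis duality to a Cartier operator $\psi_u\in\cC_e(R)$ acting on $R=S/I$ by $\bar r\mapsto\overline{\pi_e(u\cdot\tilde r)}$, where $\pi_e$ is the standard Cartier operator on $S$ sending ${\bf x}^\alpha$ to ${\bf x}^c$ with $c_i=(\alpha_i-(p^e-1))/p^e$ whenever $\alpha_i\equiv -1\pmod{p^e}$ for all $i$ and to $0$ otherwise, and $\tilde r$ is any lift of $\bar r$ to $S$. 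This is well-defined because $u\cdot I\subseteq I^{[p^e]}$ and $\pi_e(I^{[p^e]})\subseteq I$, and as $e\geq 1$ and $u$ vary the family $\{\psi_u\}$ generates $\cC_+(R)$ as a right $R$-module.

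The second step is a direct monomial computation. For $u={\bf x}^\gamma$ as above and any monomial ${\bf x}^\beta\in S$,
\[
\delta_S\bigl(\pi_e({\bf x}^{\gamma+\beta})\bigr) \;=\; \frac{\max_i(\gamma_i+\beta_i)-(p^e-1)}{p^e} \;\leq\; \frac{\delta_S(u)+\delta_S({\bf x}^\beta)-(p^e-1)}{p^e},
\]
with the left-hand side interpreted as $-\infty$ when the operator vanishes. Since $k$ is perfect, $\pi_e$ is additive on $k$-linear combinations, so the same inequality holds with ${\bf x}^\beta$ replaced by an arbitrary lift $\tilde r$. Choosing $\tilde r$ to lift $\bar r$ with $\delta_S(\tilde r)=\delta_R(\bar r)$, which is possible since the standard gauge on $R$ is the infimum of $\delta_S$ over lifts, then gives
\[
\delta_R\bigl(\psi_u(\bar r)\bigr) \;\leq\; \delta_S\bigl(\pi_e(u\tilde r)\bigr) \;\leq\; \frac{\delta_R(\bar r)}{p^e} + \frac{\delta_S(u)-(p^e-1)}{p^e}.
\]
Since $\delta_S(u)=\max_i\gamma_i\leq p^e$, the additive error is at most $1/p^e$.

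To conclude, I would take $K:=p-1$, so that $K/(p-1)=1\geq 1/p^e$ for every $e\geq 1$; then $\delta_R(\psi_u(\bar r))\leq \delta_R(\bar r)/p^e + K/(p-1)$ holds uniformly for every generator $\psi_u$ of $\cC_+(R)$, which is the definition of gauge boundedness. The main subtlety I anticipate is the infinitely-generated case, where new generators of $\cC_e(R)$ appear for arbitrarily large $e$ and the gauge estimate has to be uniform in $e$: the crucial input from Section 3 is that, no matter how large $e$ is, the exponents of the generating monomials are bounded by $p^e$, which keeps $\delta_S(u)/p^e\leq 1$ and in fact makes the additive error $1/p^e$ shrink with $e$. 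Once the explicit Matlis-dual action in the first step is pinned down, the rest is essentially a one-line monomial calculation.
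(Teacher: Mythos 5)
Your proposal is correct and follows essentially the same route as the paper: both identify the right $R$-module generators of $\cC_e(R)$ as $\psi_e\circ{\bf x}^\gamma$ with $\gamma_i\in\{0,p^e-1,p^e\}$ coming from the Section 3 description of $(I^{[p^e]}:_S I)$, and then bound the gauge by a direct monomial computation with the trace map, arriving at the same additive error $1/p^e$. Your explicit choice of a single uniform constant $K=p-1$ valid for all $e$ is a slightly more careful formulation of the same conclusion.
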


M. Blickle \cite[Remark 4.20]{Bli09} already pointed out that the example studied by M. Katzman in
\cite{Kat10} is gauge bounded.

\begin{proof}
Recall from Section $3.1$ that the generators of the Frobenius algebra $\cF(E_R)$ are the monomials $u^{p-1}:={\bf x}^{(p-1){\bf 1}}$ and $
{\bf x}^\gamma:=x_1^{\gamma_1}\cdots x_n^{\gamma_n}\in J_{p^e}$, in particular $\gamma_i\in\{0,p^e-1,p^e\}$, $\forall e\geq 1$. Denote $\supp_{a}(\gamma):=\{i \hskip 2mm | \hskip 2mm \gamma_i= a\}$ for $a= 0,p^e-1,p^e$.  The right $R$-module generators of $\cC_{e}(R)$ are the  unique additive maps
$\psi_{e,\gamma}:=\psi_{e} \circ {\bf x}^\gamma$, where

\begin{equation*}
 \psi_{e}(c{\bf x}^\alpha):=\begin{cases}c^{1/p^e} {\bf x}^{\frac{\alpha +1}{p^e}-1}:= c^{1/p^e} {x_1}^{^{\frac{\alpha_1 +1}{p^e}-1}} \cdots {x_n}^{^{\frac{\alpha_n +1}{p^e}-1}} \hskip 2mm
\text{ if }
p^e| \alpha_j+1 \hskip 2mm \forall j\\ 0,\text{ otherwise}.\end{cases}
\end{equation*}
i.e. is the map that sends ${\bf x}^{p^e-1}$ to $1$ and ${\bf x}^\alpha$ to $0$ for $0\leq \alpha\leq p^e-1$ (see \cite[Remarks $3.11$ and $4.7$]{BSTZ}). Therefore

\begin{equation*}
\psi_{e,\gamma}(c{\bf x}^\alpha)= \psi_{e}(c{\bf x}^{\alpha + \gamma}):=\begin{cases}c^{1/p^e} {\bf x}^{\frac{\alpha +\gamma +1}{p^e}-1} \hskip 2mm
\text{ if }
p^e|(\alpha_j+\gamma_j +1) \hskip 2mm \forall j\\ 0,\text{ otherwise}.\end{cases}
\end{equation*}
In particular, for $0\leq \alpha\leq p^e-1$ we have:
\begin{equation*}
\psi_{e,\gamma}(c{\bf x}^\alpha)= \begin{cases}c^{1/p^e} \prod_{\tiny i\in \supp_{p^e}(\gamma)} x_i \hskip 2mm
\text{ if }
\alpha_i= {\tiny \begin{cases} p^e-1, \hskip 2mm
\text{ if }  i \not \in \supp_{p^e-1}(\gamma) \\ 0, \hskip 8mm
\text{ if }  i \in \supp_{p^e-1}(\gamma).\end{cases}} \hskip 2mm \\ 0,\text{ otherwise}.\end{cases}
\end{equation*}

\noindent It follows that $\delta(\psi_{e,\gamma}(r))\leq \frac{\delta(r)}{p^e} + \frac{K}{p-1}$ \hskip 2mm $\forall r\in R$ for $K\geq \frac{p-1}{p^e}$ so $\cC(R)$ is gauge bounded with bound $\frac{1}{p^e(p-1)}$.
\end{proof}

\begin{remark}
It follows from \cite[Cor 4.9]{Bli09} that the finite dimensional $k$-vector space $$R_{\leq \lfloor \frac{1}{p^{e}(p-1)} +1 \rfloor}= R_{\leq 1}$$ is a nukleus for $(R,\cC(R))$ so any $F$-pure submodule of $R$ has generators in $R_{\leq 1}$. Recall that  it is proven in \cite{CV} that the generators of the test ideal $\tau(R)$ of a Stanley-Reisner ring $R$ are squarefree monomials .
\end{remark}

\begin{remark}
Consider the right $R$-module generator $\psi_{1,u}:=\psi_{1} \circ {\bf x}^{p-1}$ of $\cC_{1}(R)$. It sends $1$ to $1$ so it gives a splitting of the Frobenius. Thus, any Stanley-Reisner ring is $F$-split so it is also $F$-pure since $R$ is $F$-finite  and we recover the result in \cite{HR}.
\end{remark}

The main consequence that one derives from the fact that the Cartier algebra is gauge bounded is the discreteness of the $F$-jumping numbers of generalized test ideals associated to pairs in the sense of Hara and Yoshida.
We point out that the result we obtain is not covered in \cite{Bli09} since the Cartier algebra of a Stanley-Reisner
ring might be infinitely generated.

\begin{corollary}
Let $\fa\subseteq R$ be an ideal of a  Stanley-Reisner ring $R$ over  a perfect field $k$. Then the $F$-jumping numbers of the generalized test ideals $\tau(R,{\fa}^t)$ are a discrete set of $\bR_{\geq 0}$.
\end{corollary}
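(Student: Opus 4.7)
The plan is to reduce the statement to Blickle's discreteness theorem \cite[Cor 4.19]{Bli09} by producing, for each ideal $\fa \subseteq R$, an appropriate $R$-Cartier subalgebra of $\cC(R)$ that (a) computes the generalized test ideals of Hara--Yoshida and (b) inherits gauge-boundedness from $\cC(R)$. The previous proposition takes care of the gauge-boundedness of $\cC(R)$; what we need is to propagate this property down to the $\fa^t$-twisted Cartier subalgebra.

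First I would recall the construction used by Blickle: to a pair $(R,\fa^t)$ one associates the $R$-Cartier subalgebra
\[
\cC_R^{\fa^t} \; := \; \bigoplus_{e\geq 0} \cC_e(R)\cdot \fa^{\lceil t(p^e-1)\rceil} \; \subseteq \; \cC(R),
\]
and by \cite[Prop.~3.21]{Bli09} one has the identification $\tau(R,\cC_R^{\fa^t}) = \tau(R,\fa^t)$, with matching $F$-jumping numbers. By \cite[Cor.~4.19]{Bli09} the $F$-jumping numbers of the test module of any gauge-bounded $R$-Cartier algebra form a discrete subset of $\bR_{\geq 0}$. So the corollary will follow once we verify that $\cC_R^{\fa^t}$ is gauge bounded with respect to the standard gauge $\delta_R$ on $R$.

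For this verification, I would argue as follows. By the previous proposition, $\cC(R)$ admits a system of right $R$-module generators $\{\psi_{e,\gamma}\}$ with the gauge inequality $\delta_R(\psi_{e,\gamma}(r)) \leq \delta_R(r)/p^e + 1/(p^e(p-1))$. A right $R$-module generating set for $\cC_R^{\fa^t}$ in degree $e$ is then obtained by composing the $\psi_{e,\gamma}$ with multiplication by any fixed finite set of monomial generators $a_1,\dots,a_N$ of $\fa^{\lceil t(p^e-1)\rceil}$. Since each monomial $a_j$ satisfies $\delta_R(a_j\, r)\leq \delta_R(r) + \delta_R(a_j)$ and $\delta_R(a_j)\leq C\cdot \lceil t(p^e-1)\rceil$ for $C$ the maximum gauge of a generator of $\fa$, one gets
\[
\delta_R\bigl(\psi_{e,\gamma}(a_j r)\bigr) \;\leq\; \frac{\delta_R(r) + C\lceil t(p^e-1)\rceil}{p^e} + \frac{1}{p^e(p-1)} \;\leq\; \frac{\delta_R(r)}{p^e} + \frac{Ct+1}{p-1},
\]
so the gauge bound $K/(p-1)$ can be taken uniformly in $e$ with $K=Ct+1$. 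Hence $\cC_R^{\fa^t}$ is gauge bounded.

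Combining these two observations with Blickle's discreteness theorem yields the statement. The only delicate point to get right is the uniform bound on $e$ in the inequality above; everything else is routine bookkeeping with the description of the generators of $\cC(R)$ worked out in Section~3 and recalled in the proof of the previous proposition.
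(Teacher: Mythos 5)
Your proposal is correct and takes essentially the same route as the paper: the paper's proof consists of citing its Proposition 5.3 together with Proposition 4.15, Corollary 4.19 and Proposition 3.21 of \cite{Bli09}, and your explicit verification that $\cC_R^{\fa^t}$ inherits gauge-boundedness from $\cC(R)$ is exactly the content of the cited Proposition 4.15. (Your final constant $\frac{Ct+1}{p-1}$ does not quite follow from the line before it, but this is immaterial: gauge-boundedness only asks for \emph{some} constant uniform in $e$, and your estimate clearly supplies one, e.g.\ $\frac{\delta_R(r)}{p^e}+Ct+C+1$.)
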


\begin{proof}

The result follows from the previous Proposition 5.3 and then by applying Proposition 4.15, Corollary 4.19 and Proposition 3.21 in \cite{Bli09}

\end{proof}

\subsection{Cartier operators vs. differential operators}

The ring of differential operators $D_R$ is the subring of End$_{\mathbb Z}R$, whose elements are defined inductively on the order (cf. \cite[\S 16.8]{GD67}). Namely, a differential operator of order $0$ is the multiplication by an element of $R$, and a differential operator of order $n$ is an additive map $\delta: R\longrightarrow R$ such that for every $r\in R$, the commutator $[\delta, r]=\delta\circ  r- r\circ \delta$ is a differential operator of order $\leq n-1$.

\vskip 2mm

In general $D_R$ is not Noetherian but one has a filtration of $R$-algebras $D_R^{(e)}:=\End_{R}(F^e_{\ast}R)$ such that $D_R = \bigcup_{e\geq 0} D_R^{(e)}$ for $F$-finite rings \cite[1.4.8a]{Yeku}. The elements of $D_R^{(e)}$  are nothing but differential operators that are linear over $R^{p^e}$.

\vskip 2mm

Notice that we have a natural map $\Phi_e: \cC_{e}(R) \otimes_R \cF^e(R) \lra D_R^{(e)}$, i.e. a natural map     $$\Phi_e: \Hom_R(F_{\ast}^eR,R) \otimes_R \Hom_R(R, F_{\ast}^eR) \lra \End_{R}(F^e_{\ast}R) $$ given by sending $\psi_e \otimes \phi_e$ to its composition $ \phi_e  \circ \psi_e$, where $\psi_e$ (resp. $\phi_e$) is a $p^{-e}$-linear map (resp. $p^{e}$-linear map). We are considering here $\Hom_R(F_{\ast}^eR,R)$ as a right $R$-module and $\Hom_R(R, F_{\ast}^eR)$ as a left $R$-module so the tensor product is just an abelian group and $\Phi_e$ a morphism of abelian groups. When $R$ is regular and $F$-finite this map is an isomorphism. This is a key ingredient to the so-called {\it Frobenius descent} that establishes an equivalence of categories between the category of $R$-modules and the category of $D_R^{(e)}$-modules using the Frobenius functor (see \cite{ABL} for a quick introduction). In the situation we  consider in this work this is no longer an isomorphism but we can control the image of $\Phi_e$.

\vskip 2mm

First we recall that for the case  $S=k[x_1,\dots,x_n]$ or $S=k[[x_1,\dots,x_n]]$,
$D_{S}$ is the ring extension of $S$ generated by the
differential operators $\partial_i^{t}:= \frac{1}{t! }\frac{d^{t}}{d x_i^{t}}$
\hskip 3mm  $i=1,\dots,n$, where $\frac{d^{t}}{d x_i^{t}}$ is the $t$-th
partial derivation with respect to $x_i$. Using the multi-graded notation
$$\partial^{\alpha}=\frac{1}{\alpha_1! }\frac{d^{\alpha_1}}{d x_1^{\alpha_1}} \cdots \frac{1}{\alpha_n!}\frac{d^{\alpha_1}}{d x_1^{\alpha_1}}$$ we have that  $D_S^{(e)}$ is the ring
extension of $S$ generated by the operators $\partial^{\alpha}$ with $\alpha_i<p^e$ $\forall i$.

\vskip 2mm
The ring of diferential operators of a quotient ring $R=S/I$ is $$D_R= D_S(I)/ID_S$$ where $D_S(I):=\{\delta \in D_S \hskip 2mm | \hskip 2mm \delta(I)\subseteq I\}$. For the case of Stanley-Reisner rings we have the following presentation given by W.~N.~Traves \cite{Tra}:

\begin{proposition}
Let $I=I_{\alpha_1} \cap \cdots \cap I_{\alpha_s} \subseteq S=k[x_1,\dots,x_n]$ be a squarefree monomial and $R=S/I$ be the corresponding Stanley-Reisner ring.
A monomial $x^{\beta} \partial^{\alpha}\in D_S$ is in $D_R$ if and only if for each face ideal $I_{\alpha_i}$ in the minimal primary decomposition of $I$ we have either $x^{\beta}\in I_{\alpha_i}$ or $x^{\alpha}\not\in I_{\alpha_i}$.
In particular, $D_R$ is generated as a $k$-algebra by $\{x^{\beta} \partial^{\alpha} \hskip 2mm | \hskip 2mm x^{\beta}\in I_{\alpha_i} \hskip 2mm {\rm or} \hskip 2mm x^{\alpha}\not\in I_{\alpha_i} \hskip 2mm \forall i\}$.
\end{proposition}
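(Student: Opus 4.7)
The plan is to unwind the definition $D_R = D_S(I)/ID_S$ and reduce the question to whether the operator $x^{\beta}\partial^{\alpha}$ preserves $I$, i.e., whether it lies in $D_S(I)=\{\delta\in D_S:\delta(I)\subseteq I\}$. Since $I$ is a monomial ideal and $\partial^{\alpha}(x^{\gamma}) = \binom{\gamma}{\alpha}\,x^{\gamma-\alpha}$ (with the understanding that this vanishes unless $\gamma\geq\alpha$), stability of $I$ can be tested monomial by monomial and then intersected against each face ideal, using the basic dictionary that $x^{\delta}\in I_{\alpha_i}$ if and only if $\mathrm{supp}_{+}(\delta)\cap \mathrm{supp}_{+}(\alpha_i)\neq\emptyset$.

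\vskip 2mm

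\noindent\textbf{Sufficiency.} Fix a face ideal $I_{\alpha_i}$ and a monomial $x^{\gamma}\in I$ with $\gamma\geq\alpha$. If $x^{\beta}\in I_{\alpha_i}$, pick $j\in \mathrm{supp}_{+}(\beta)\cap \mathrm{supp}_{+}(\alpha_i)$; the inequality $\gamma_j\geq\alpha_j$ forces $(\beta+\gamma-\alpha)_j\geq\beta_j>0$, hence $x^{\beta+\gamma-\alpha}\in I_{\alpha_i}$. If instead $x^{\alpha}\notin I_{\alpha_i}$, then $\alpha_{\ell}=0$ for every $\ell\in \mathrm{supp}_{+}(\alpha_i)$, and since $x^{\gamma}\in I\subseteq I_{\alpha_i}$ there is at least one such $\ell$ with $\gamma_{\ell}>0$; then $(\beta+\gamma-\alpha)_{\ell}\geq\gamma_{\ell}>0$, giving again $x^{\beta+\gamma-\alpha}\in I_{\alpha_i}$.

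\vskip 2mm

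\noindent\textbf{Necessity (by contrapositive).} Assume some index $i_0$ satisfies simultaneously $x^{\beta}\notin I_{\alpha_{i_0}}$ and $x^{\alpha}\in I_{\alpha_{i_0}}$; I would exhibit a monomial $x^{\gamma}\in I$ with $\gamma\geq\alpha$, $\binom{\gamma}{\alpha}\not\equiv 0\pmod{p}$, and $x^{\beta+\gamma-\alpha}\notin I_{\alpha_{i_0}}$, contradicting stability of $I$. Since $\mathrm{supp}_{+}(\beta)$ is already disjoint from $\mathrm{supp}_{+}(\alpha_{i_0})$, the last requirement reduces to forcing $\gamma-\alpha$ to be supported outside $\mathrm{supp}_{+}(\alpha_{i_0})$. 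For every $j\neq i_0$ with $x^{\alpha}\notin I_{\alpha_j}$, minimality of the primary decomposition yields $\mathrm{supp}_{+}(\alpha_j)\not\subseteq \mathrm{supp}_{+}(\alpha_{i_0})$, so I choose $\ell_j\in \mathrm{supp}_{+}(\alpha_j)\setminus \mathrm{supp}_{+}(\alpha_{i_0})$. Bumping $\alpha$ upward at each such $\ell_j$ places $x^{\gamma}$ inside every $I_{\alpha_j}$ (via $x^{\alpha}$ itself when $x^{\alpha}\in I_{\alpha_j}$, via the new variable $x_{\ell_j}$ otherwise) while keeping $\gamma_{\ell}=\alpha_{\ell}$ for all $\ell\in \mathrm{supp}_{+}(\alpha_{i_0})$.

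\vskip 2mm

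\noindent\textbf{Main obstacle.} The only non-routine point is guaranteeing $\binom{\gamma}{\alpha}\not\equiv 0\pmod{p}$, since otherwise $\partial^{\alpha}$ annihilates $x^{\gamma}$ and the witness collapses. I would resolve this by increasing the exponent at each $\ell_j$ by $p^N$ rather than by $1$, for $N$ large enough that $p^N>\alpha_{\ell_j}$ for every $j$; Lucas' theorem then gives $\binom{\alpha_{\ell_j}+p^N}{\alpha_{\ell_j}}\equiv 1\pmod{p}$, so every local factor of $\binom{\gamma}{\alpha}$ is $1$ modulo $p$ and the witness survives. All remaining steps amount to combinatorial bookkeeping on supports.
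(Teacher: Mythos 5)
Your proof is correct, and it is worth noting that the paper itself offers no argument for this statement: it is quoted from Traves \cite{Tra}, so what you have written is a self-contained replacement for that citation rather than a variant of an in-text proof. Your reduction to stability of $I$ under $x^{\beta}\partial^{\alpha}$ is the right reading of $D_R=D_S(I)/ID_S$, the monomial-by-monomial test is legitimate because $I$ is a monomial ideal and the operator is multihomogeneous, the sufficiency bookkeeping on supports is sound, and in the necessity direction the two key points are handled properly: incomparability of the $I_{\alpha_j}$ (valid, since an irredundant intersection of primes consists of the minimal primes, which are pairwise incomparable) supplies the indices $\ell_j\in\mathrm{supp}_{+}(\alpha_j)\setminus\mathrm{supp}_{+}(\alpha_{i_0})$, and the bump by $p^N$ with $p^N>\alpha_{\ell_j}$ together with Lucas' theorem keeps $\binom{\gamma}{\alpha}$ nonzero in characteristic $p$ (in characteristic $0$ a bump by $1$ already works). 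Two small points you should make explicit: if the same coordinate $\ell$ is selected for several $j$, bump it only once (or note that repeated bumps still give a unit coefficient by the same digit argument); and the final clause of the proposition, that $D_R$ is generated by these monomial operators, needs the extra observation that $D_S(I)$ is a $\mathbb{Z}^n$-graded subspace of $D_S$ (because $I$ is multigraded), hence is spanned as a $k$-vector space by the monomial operators $x^{\beta}\partial^{\alpha}$ it contains, so your criterion does describe all of $D_R$ and not merely which monomial operators survive.
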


To describe the image of $\Phi_e: \cC_{e}(R) \otimes_R \cF^e(R) \lra D_R^{(e)}$ recall that the left $R$-generator of $\cF^e(R)$ is the $e$-th Frobenius map $F^e$ and the right $R$-module generators of $\cC_{e}(R)$ are:

\begin{itemize}

 \item [$\cdot$] $\psi_{e,u}:=\psi_{e} \circ {\bf x}^{(p^e-1){\bf 1}}$

 \item [$\cdot$] $\psi_{e,\gamma}:=\psi_{e} \circ {\bf x}^\gamma$ where ${\bf x}^\gamma:=x_1^{\gamma_1}\cdots x_n^{\gamma_n}\in J_{p^e}$, in particular $\gamma_i\in\{0,p^e-1,p^e\}$.

\end{itemize}

The action of the generators of $\cC_e(R)$ on any monomial was described above. Then we get the following result just comparing with the action of the corresponding differential operator:

\begin{itemize}

 \item [$\cdot$] $\Phi_e(\psi_{e,u} \otimes F^e) = \partial^{(p^e-1){\bf 1}} {\bf x}^{(p^e-1){\bf 1}}$

\item [$\cdot$] $\Phi_e(\psi_{e,\gamma}\otimes F^e)= {\bf x}^{p^e{\alpha}} {\partial}^{(p^e-1){\bf 1}} {\bf x}^{(p^e-1){\beta}}$ \hskip 2mm

\hskip 5mm where  $\alpha_i= {\tiny \begin{cases} 1 \hskip 2mm
\text{ if }  i \in \supp_{p^e}(\gamma) \\ 0 \hskip 4mm
\text{ otherwise } \end{cases}}$ and \hskip 3mm $\beta_i= {\tiny \begin{cases} 1 \hskip 2mm
\text{ if }  i \in \supp_{p^e -1}(\gamma) \\ 0 \hskip 4mm
\text{ otherwise } \end{cases}}$

\end{itemize}

\vskip 2mm

\noindent
Note that the above expression for the elements in the image of $\Phi_e$ is not the same as the one in Proposition 5.8, but applying the relations defining $D_R$ it is not hard to get it.

\vskip 2mm

Using the description in \cite{Tra} we find differential operators
in $D_R^{(e)}$ that do not belong to the image of $\Phi_e$. Take for
example $x_i\partial_i^{p^e-1}$ where $x_i \not \in I_\alpha$ for
some face ideal $I_\alpha$ in the minimal primary decomposition of
$I$.

\begin{example}
Let $R$ be the Stanley-Reisner ring associated to $I=(y)\cap (x,z)$.
The ring of differential operators $D_R$ is the $R$-algebra
generated by $\{x\partial_1^n
\partial_3^m, z\partial_1^n
\partial_3^m, y\partial_2^n \}_{n,m\geq 0}$ \cite[Example 4.3]{Tra}.

\vskip 2mm

The Cartier algebra is generated by $\{x^{p^e}y^{p^e-1},
z^{p^e}y^{p^e-1}, x^{p^e-1}y^{p^e-1}z^{p^e-1} \}_{e\geq 0}$ that
correspond via $\Phi$ to the differential operators
$$\{x^{p^e}\partial_1^{p^e-1}\partial_2^{p^e-1}\partial_3^{p^e-1}y^{p^e-1},
z^{p^e}\partial_1^{p^e-1}\partial_2^{p^e-1}\partial_3^{p^e-1}y^{p^e-1},
\partial_1^{p^e-1}\partial_2^{p^e-1}\partial_3^{p^e-1}x^{p^e-1}y^{p^e-1}z^{p^e-1} \}_{e\geq
0}$$ Notice that $x\partial_1^{p^e-1}$ does not even belong to the
$R$-algebra generated by this set.

\end{example}

\section*{Acknowledgements}
We would like to thank M.~Blickle, M.~Katzman, R.~Y.~Sharp and A.~Singh for many useful comments.

\end{document}